\newcommand{\supp}{\operatorname{supp}}
\newcommand{\link}{\operatorname{link}}
\newcommand{\Z}{\mathbb{Z}}
\newcommand{\R}{\mathbb{R}}
\newcommand{\C}{\mathbb{C}}
\newcommand{\dc}{\operatorname{dc}}
\newcommand{\bb}{\mathfrak{B}}
\newcommand{\sss}{\mathfrak{S}}
\newcommand{\hh}{\mathbf{H}}
\newcommand{\hhh}{\mathbf{\hat{H}}}
\renewcommand{\Re}{\operatorname{Re}}
\renewcommand{\Im}{\operatorname{Im}}
\newtheorem{thm}{Theorem}
\newtheorem{prop}[thm]{Proposition}
\newtheorem{lem}[thm]{Lemma}
\newtheorem{ex}[thm]{Example}
\newtheorem*{ack}{Acknowledgements}
\theoremstyle{definition}
\newtheorem{defn}[thm]{Definition}
\newtheorem{rmk}[thm]{Remark}
\newtheoremstyle{TheoremNum}{\parskip}{\parskip}{\itshape}{}{\bfseries}{$\mathbf{'}$.}{ }{\thmname{#1}\thmnote{ \bfseries #3}}
\theoremstyle{TheoremNum}
\begin{document}

\title[Rational growth and degree of commutativity of graph products]{Rational growth and degree of commutativity \\ of graph products}
\author{Motiejus Valiunas}
\address{Mathematical Sciences \\ University of Southampton \\ University Road \\ Southampton SO17 1BJ \\ United Kingdom}
%\ead{m.valiunas@soton.ac.uk}
\email{m.valiunas@soton.ac.uk}
%\date{\today}
%\begin{keyword}
%graph products of groups \sep degree of commutativity \sep rational growth series
\keywords{Graph products of groups, degree of commutativity, rational growth series}
%\MSC[2010]{20F36, 20P05}
\subjclass[2010]{20F36, 20P05}
%\end{keyword}

\begin{abstract}
Let $G$ be an infinite group and let $X$ be a finite generating set for $G$ such that the growth series of $G$ with respect to $X$ is a rational function; in this case $G$ is said to have rational growth with respect to $X$. In this paper a result on sizes of spheres (or balls) in the Cayley graph $\Gamma(G,X)$ is obtained: namely, the size of the sphere of radius $n$ is bounded above and below by positive constant multiples of $n^\alpha \lambda^n$ for some integer $\alpha \geq 0$ and some $\lambda \geq 1$.

As an application of this result, a calculation of degree of commutativity (d.~c.) is provided: for a finite group $F$, its d.~c.\ is defined as the probability that two randomly chosen elements in $F$ commute, and Antol\'in, Martino and Ventura have recently generalised this concept to all finitely generated groups. It has been conjectured that the d.~c.\ of a group $G$ of exponential growth is zero. This paper verifies the conjecture (for certain generating sets) when $G$ is a right-angled Artin group or, more generally, a graph product of groups of rational growth in which centralisers of non-trivial elements are ``uniformly small''.
\end{abstract}
\maketitle

\section{Introduction}
\label{s:intro}

Let $G$ be a group which has a finite generating set $X$. For any element $g \in G$, let $|g| = |g|_X$ be the word length of $g$ with respect to $X$. For any $n \in \Z_{\geq 0}$, let
\begin{equation*}
B_{G,X}(n) := \{ g \in G \mid |g|_X \leq n \}
\end{equation*}
be the \emph{ball} in $G$ with respect to $X$ of radius $n$, and let
\begin{equation*}
S_{G,X}(n) := \{ g \in G \mid |g|_X = n \}
\end{equation*}
be the \emph{sphere} in $G$ with respect to $X$ of radius $n$. One writes $B_G(n)$ or $B(n)$ for the ball (and $S_G(n)$ or $S(n)$ for the sphere) if the generating set or the group itself is clear. A group $G$ is said to have \emph{exponential growth} if
\begin{equation} \label{e:liminfballs}
\liminf_{n \to \infty} \frac{\log |B_{G,X}(n)|}{n} > 0
\end{equation}
and \emph{subexponential growth} otherwise; note that as there are at most $(2|X|)^n$ words over $X^{\pm 1}$ of length $n$, the limit in \eqref{e:liminfballs} is finite, so the group cannot have `superexponential' growth. A group $G$ is said to have \emph{polynomial growth of degree $d$} if
\begin{equation*}
d := \limsup_{n \to \infty} \frac{\log |B_{G,X}(n)|}{\log n} < \infty
\end{equation*}
and \emph{superpolynomial growth} otherwise. It is well-known that having exponential growth or polynomial growth of degree $d$ is independent of the generating set $X$.

The pairs $(G,X)$ as above considered in this paper will have some special properties. In particular, consider the (\emph{spherical}) \emph{growth series} $s_{G,X}(t)$ of a finitely generated group $G$ with a finite generating set $X$, defined by
\begin{equation*}
s_{G,X}(t) = \sum_{g \in G} t^{|g|_X} = \sum_{n=0}^\infty |S_{G,X}(n)| t^n.
\end{equation*}
Cases of particular interest includes pairs $(G,X)$ for which $s_{G,X}(t)$ is a rational function, i.e.\ a ratio two polynomials; in this case $G$ is said to have \emph{rational growth} with respect to $X$. %This is true when $G$ is a finitely generated virtually abelian group \cite{benson}, a hyperbolic group \cite{gromov87} or the integral Heisenberg group \cite{duchin}, and $X$ is any finite generating set; it is also true for several other classes of groups with respect to certain generating sets.
In general, this property depends on the chosen generating set: for instance, the higher Heisenberg group $G = H_2(\Z)$ has two finite generating sets $X_1$, $X_2$ such that $s_{G,X_1}(t)$ is rational but $s_{G,X_2}(t)$ is not \cite{stoll96}.

Rational growth series implies some nice properties on the growth of a group. In particular, one can obtain the first main result of this paper:
\begin{thm}
\label{t:growth}
Let $G$ be an infinite group with a finite generating set $X$ such that $s_{G,X}(t)$ is a rational function. Then there exist constants $\alpha \in \Z_{\geq 0}$, $\lambda \in [1,\infty)$ and $D > C > 0$ such that 
\begin{equation*}
Cn^\alpha \lambda^n \leq |S_{G,X}(n)| \leq Dn^\alpha \lambda^n
\end{equation*}
for all $n \geq 1$.
\end{thm}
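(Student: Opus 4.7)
\emph{Proof sketch.} The approach is to extract the asymptotics of $|S_{G,X}(n)|$ from the partial fraction decomposition of the rational function $s_{G,X}(t)=P(t)/Q(t)$. Factoring $Q(t)=\prod_j(1-\beta_j t)^{d_j}$ over $\C$ and expanding each summand $(1-\beta_j t)^{-i}$ as a geometric-type series yields, for $n$ sufficiently large, an explicit formula
\[
|S_{G,X}(n)| = \sum_j \sum_{i=1}^{d_j} c_{j,i}\binom{n+i-1}{i-1}\beta_j^n.
\]
Setting $\lambda := \max_j|\beta_j|$ and letting $\alpha+1$ be the maximum of $d_j$ over indices $j$ with $|\beta_j|=\lambda$, the triangle inequality gives the upper bound $|S_{G,X}(n)| \leq Dn^\alpha\lambda^n$ after absorbing subdominant contributions (those with $|\beta_j|<\lambda$, or with $|\beta_j|=\lambda$ but $d_j \leq \alpha$) into the constant $D$. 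Since $G$ is infinite, $|S_{G,X}(n)|\geq 1$ for every $n$, which forces $\lambda\geq 1$.

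For the lower bound I invoke Pringsheim's theorem: non-negativity of the coefficients forces the radius of convergence $1/\lambda$ to itself be a pole of $s_{G,X}$, so $\lambda$ is among the $\beta_j$; a standard comparison argument then shows that this real pole achieves the maximal multiplicity $\alpha+1$ on the circle $|t|=1/\lambda$ with strictly positive leading Laurent coefficient. Writing
\[
|S_{G,X}(n)| = n^\alpha\lambda^n g(n) + O(n^{\alpha-1}\lambda^n),
\]
where $g(n) = \frac{1}{\alpha!}\sum_{|\beta_j|=\lambda,\,d_j=\alpha+1} c_{j,\alpha+1}(\beta_j/\lambda)^n$ is a real-valued almost-periodic function with strictly positive average and satisfies $g(n)\geq 0$ (by positivity of $|S_{G,X}|$), the remaining task is to establish a uniform positive lower bound on $g$.

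The main technical hurdle is ruling out zeros or accumulation-to-zero of $g$. My plan is to pass to the ball sequence $|B_{G,X}(n)|$: its generating series $s_{G,X}(t)/(1-t)$ is again rational with essentially the same dominant pole data, and \emph{strict} monotonicity of $|B_{G,X}|$ provides the needed rigidity. Concretely, if the analogous oscillation $g_B$ had zero infimum, then $|B_{G,X}(n_k)| = o(n_k^{\alpha_B}\lambda^{n_k})$ along some sequence $n_k\to\infty$, while almost-periodicity of $g_B$ would supply integers $m_k \leq n_k$ with $n_k-m_k$ uniformly bounded and $|B_{G,X}(m_k)| \geq c\, m_k^{\alpha_B}\lambda^{m_k}$, contradicting monotonicity. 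Hence $g_B$ is uniformly bounded below. Transferring the bound to spheres is immediate when $\lambda=1$ (in the polynomial growth case $|B_{G,X}|$ is eventually a polynomial of degree $\alpha+1$ with positive leading coefficient, so its first difference $|S_{G,X}|$ is a polynomial of degree $\alpha$ with positive leading coefficient), but more delicate when $\lambda>1$: using the telescoping identity $g_B(n)=\sum_{k\geq 0}g(n-k)/\lambda^k$ derivable from partial fractions, together with $g\geq 0$ and $g_B\geq\gamma>0$, one forces $g$ itself to be bounded below by a positive constant. I expect this last transfer to be the subtlest step.
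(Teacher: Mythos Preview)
Your setup via partial fractions, the extraction of $(\alpha,\lambda)$, and the upper bound are essentially identical to the paper's. Your appeal to Pringsheim's theorem (together with the comparison $|s(t)|\le s(|t|)$ for non-negative coefficients) to deduce that $1/\lambda$ is a pole of maximal order with positive leading Laurent coefficient is a clean shortcut; the paper instead recovers the non-negativity of the leading almost-periodic part $c_n$ a posteriori from the theory of uniformly almost periodic functions. Your monotonicity argument for balls is also correct and yields $\inf_n g_B(n)>0$.

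The gap is precisely where you flagged it: the transfer from balls to spheres does not go through. Knowing $g\ge 0$ and $g_B(n)=\sum_{k\ge 0}g(n-k)\lambda^{-k}\ge\gamma>0$ does \emph{not} force $\inf_n g(n)>0$; a geometrically weighted average can be uniformly positive while the underlying sequence touches zero. Concretely, the paper's own Example~\ref{ex:liminf}\eqref{i:ex-sm} exhibits a sequence $(a_n)$ of positive integers with rational generating function, $\lambda=2$, $\alpha=0$, and $c_n=0$ whenever $n\equiv 0\pmod 6$, so $\liminf a_n/2^n=0$. The associated cumulative sums $b_n=\sum_{j\le n}a_j$ are strictly increasing, and one checks directly that $g_B(n)=8-8\operatorname{Re}\!\bigl(\omega^{n+1}/(2\omega-1)\bigr)\ge 8(1-1/\sqrt 3)>0$. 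Thus every hypothesis you invoke (non-negative ``spheres'', monotone ``balls'', $g\ge 0$, $g_B\ge\gamma$) is satisfied, yet the desired conclusion for $g$ fails. Since your argument uses nothing about $|S_{G,X}(n)|$ beyond positivity, it would apply verbatim to this example and prove a false statement.

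What is missing is exactly the extra geometric input the paper exploits: \emph{submultiplicativity} of sphere sizes, $\sss(n_1+n_2)\le\sss(n_1)\sss(n_2)$. The paper first shows (via Besicovitch's theory of uniformly almost periodic functions, applied to $t\mapsto c(\lfloor t\rfloor)$) that the set $E_\delta=\{n:c_n\ge\delta\}$ is relatively dense for some $\delta>0$. Then, for arbitrary $n$, one picks $\beta\in\{0,\dots,N\}$ with $n+\beta\in E_\delta$ and uses $\sss(n+\beta)\le\sss(n)\sss(\beta)$ to pull the lower bound back to $n$. Your ball-monotonicity idea can replace the almost-periodic machinery in establishing the relatively-dense lower bound (indeed your argument shows more, for balls), but to propagate it to \emph{every} $n$ at the sphere level you still need submultiplicativity or an equivalent device; monotonicity of balls alone is too weak.
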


Some of the ideas that go into the proof of Theorem \ref{t:growth} appear in the work of Stoll \cite{stoll96}, where asymptotics of ball sizes are used to show that the higher Heisenberg group $G = H_2(\Z)$ has a finite generating set $X$ such that the series $s_{G,X}(t)$ is transcendental.

\begin{rmk}
It is clear that, with the assumptions and notation as above, Theorem \ref{t:growth} implies
\[
\liminf_{n \to \infty} \frac{|S_{G,X}(n)|}{n^\alpha \lambda^n} \geq C > 0 \qquad \text{and} \qquad \limsup_{n \to \infty} \frac{|S_{G,X}(n)|}{n^\alpha \lambda^n} \leq D < \infty.
\]
It is easy to check that the converse implication is also true. In particular, the conclusion of Theorem \ref{t:growth} is equivalent to the statement that there exist $\alpha \in \Z_{\geq 0}$ and $\lambda \in [1,\infty)$ such that
\[
\liminf_{n \to \infty} \frac{|S_{G,X}(n)|}{n^\alpha \lambda^n} > 0 \qquad \text{and} \qquad \limsup_{n \to \infty} \frac{|S_{G,X}(n)|}{n^\alpha \lambda^n} < \infty.
\]
\end{rmk}

Theorem \ref{t:growth} agrees with the result for hyperbolic groups. Indeed, it is known that if $G$ is a hyperbolic group and $X$ is a finite generating set, then $s_{G,X}(t)$ is rational \cite[Theorem 8.5.N]{gromov87}. In this case the Theorem gives a weaker version of \cite[Th\'eor\`eme 7.2]{coornaert}, which states that the conclusion of Theorem \ref{t:growth} holds with $\alpha = 0$.

As an application of Theorem \ref{t:growth} a calculation of degree of commutativity is provided. For a finite group $F$, the \emph{degree of commutativity} of $F$ was defined by Erd\H{o}s and Tur\'an \cite{erdos} and Gustafson \cite{gustafson} as
\begin{equation}
\label{e:defdcfin}
\dc(F) := \frac{|\{(x,y) \in F^2 \mid [x,y] = 1\}|}{|F|^2},
\end{equation}
i.e.\ the probability that two elements of $F$ chosen uniformly at random commute. In \cite{amv}, Antol\'{i}n, Martino and Ventura generalise this definition to infinite finitely generated groups:
\begin{defn}
Let $G$ be a finitely generated group and let $X$ be a finite generating set for $G$. The \emph{degree of commutativity} for $G$ with respect to $X$ is
\begin{equation*}
\begin{aligned}
\dc_X(G) := &\limsup_{n \to \infty} \frac{|\{(x,y) \in B_{G,X}(n)^2 \mid [x,y] = 1\}|}{|B_{G,X}(n)|^2} \\ = &\limsup_{n \to \infty} \frac{\sum_{x \in B_{G,X}(n)} |C_G(x) \cap B_{G,X}(n)|}{|B_{G,X}(n)|^2},
\end{aligned}
\end{equation*}
where $C_G(x)$ is the centraliser of $x$ in $G$.
\end{defn}
Note that if $G$ is finite then for any generating set $X$ one has $B_{G,X}(N) = G$ for all sufficiently large $N$, so this definition agrees with \eqref{e:defdcfin}.

It is known that $\dc_X(G) = 0$ when $G$ is either a non-virtually-abelian residually finite group of subexponential growth \cite[Theorem 1.3]{amv} or a non-elementary hyperbolic group \cite[Theorem 1.7]{amv}, independently of the generating set $X$. It has been conjectured that indeed $\dc_X(G) = 0$ whenever $G$ has superpolynomial growth \cite[Conjecture 1.6]{amv}.

The interest of this paper is the degree of commutativity of graph products of groups.
\begin{defn}
Let $\Gamma$ be a finite simple (undirected) graph, and let $\hh: V(\Gamma) \to \mathcal{G}$ be a map from the vertex set of $\Gamma$ to the category $\mathcal{G}$ of groups; suppose that $\hh(v) \ncong \{1\}$ for each $v \in V(\Gamma)$. Let
\begin{equation*}
\tilde{G}(\Gamma,\hh) := \ast_{v \in V(\Gamma)} \hh(v)
\end{equation*}
be a free product of groups, and let
\begin{equation*}
R(\Gamma,\hh) := \{ [g,h] \mid g \in \hh(v), h \in \hh(w), \{v,w\} \in E(\Gamma) \}.
\end{equation*}
Then the \emph{graph product} associated with $\Gamma$ and $\hh$ is defined to be the group
\begin{equation*}
G(\Gamma,\hh) := \tilde{G}(\Gamma,\hh) / \langle\!\langle R(\Gamma,\hh) \rangle\!\rangle^{\tilde{G}(\Gamma,\hh)}.
\end{equation*}
\end{defn}
In particular, this is the construction of \emph{right-angled Artin} (respectively \emph{Coxeter}) \emph{groups} if $\hh(v) \cong \Z$ (respectively $\hh(v) \cong C_2$) for all $v \in \Gamma$.

This paper considers groups $G$ which, together with their finite generating sets $X$, belong to a certain class, defined as follows.
\begin{defn}
\label{d:rp}
Say a pair $(G,X)$ with a group $G$ and a finite generating set $X$ of $G$ is a \emph{rational pair with small centralisers} if the following two conditions hold:
\begin{enumerate}[(i)]
\item \label{i:drp-rat} $s_{G,X}(t)$ is a rational;
\item \label{i:drp-sc} there exist constants $P,\beta \in \Z_{\geq 1}$ such that $|C_G(g) \cap B_{G,X}(n)| \leq P n^\beta$ for all $n \geq 1$ and all non-trivial elements $g \in G$.
\end{enumerate}
\end{defn}

Note that condition \eqref{i:drp-sc} is independent of the choice of a generating set $X$: indeed, as any word metrics on $G$ associated with generating sets $X$ and $\hat{X}$ are bi-Lipschitz equivalent, the inequality $|C_G(g) \cap B_{G,X}(n)| \leq P n^\beta$ implies the inequality $|C_G(g) \cap B_{G,\hat{X}}(n)| \leq \hat{P} n^\beta$ for some $\hat{P} \in \Z_{\geq 1}$ depending only on $\hat{X}$ and $P$.

It was shown in \cite{chiswell} that, given a finite simple graph $\Gamma$ with a group $\hh(v)$ and a finite generating set $X(v) \subseteq \hh(v)$ associated to every vertex $v \in V(\Gamma)$, if $s_{\hh(v),X(v)}(t)$ is rational for each $v \in V(\Gamma)$ then so is $s_{G(\Gamma,\hh),X(\Gamma,\hh)}(t)$, where $X(\Gamma,\hh) = \bigsqcup_{v \in V(\Gamma)} X(v)$.

If $G(\Gamma,\hh)$ has exponential growth, then, together with an explicit form of centralisers in $G(\Gamma,\hh)$, described in \cite{barkauskas}, Theorem \ref{t:growth} can be used to compute the degree of commutativity of $G(\Gamma,\hh)$:
\begin{thm}
\label{t:dc}
Let $\Gamma$ be a finite simple graph, and for each vertex $v \in V(\Gamma)$, let $(\hh(v),X(v))$ be a rational pair with small centralisers. Suppose that $G(\Gamma,\hh)$ has exponential growth, and let $X = \bigsqcup_{v \in V(\Gamma)} X(v)$. Then
\begin{equation*}
\dc_{X}(G(\Gamma,\hh)) = 0.
\end{equation*}
\end{thm}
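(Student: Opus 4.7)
The strategy combines Theorem \ref{t:growth} (applied both to $G := G(\Gamma, \hh)$ and to suitable subgroups of it) with Barkauskas's description \cite{barkauskas} of centralisers in graph products.

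By the rationality result of \cite{chiswell}, $s_{G,X}(t)$ is rational, so Theorem \ref{t:growth} yields constants $\alpha \in \Z_{\geq 0}$, $\lambda \geq 1$ and $C, D > 0$ with $C n^\alpha \lambda^n \leq |B_{G,X}(n)| \leq D n^\alpha \lambda^n$; the exponential growth hypothesis forces $\lambda > 1$. The core of the argument is then a uniform centraliser bound: the goal is to produce constants $Q > 0$, $\gamma \geq 0$ and $\mu \in [1, \lambda)$ such that
\[
|C_G(x) \cap B_{G,X}(n)| \leq Q n^\gamma \mu^n
\]
for every non-trivial $x \in G$ and every $n \geq 1$. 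Granting such a bound, a short computation gives
\[
\sum_{x \in B_{G,X}(n)} |C_G(x) \cap B_{G,X}(n)| \leq |B_{G,X}(n)| + (|B_{G,X}(n)| - 1) \cdot Q n^\gamma \mu^n,
\]
and dividing by $|B_{G,X}(n)|^2 \geq C^2 n^{2\alpha} \lambda^{2n}$ gives a ratio tending to $0$ since $\mu < \lambda$.

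To establish the uniform centraliser bound, I would first reduce to a cyclically reduced $\tilde x$ conjugate to $x$, using the graph-product normal form to control the length of the conjugator by $|x|$. Barkauskas's theorem then exhibits $C_G(\tilde x)$ as a direct product of:
\begin{enumerate}[(a)]
\item polynomial-growth factors, namely centralisers $C_{\hh(v)}(\tilde x_v)$ of non-trivial syllables of $\tilde x$ inside vertex groups (bounded by $P n^\beta$ by Definition \ref{d:rp}(ii)) and cyclic factors generated by the ``primitive roots'' of the block-cyclic components of $\tilde x$;
\item a graph product $G(\Gamma', \hh|_{\Gamma'})$ on some proper induced subgraph $\Gamma' \subsetneq \Gamma$ (the subgraph whose vertex groups commute with all of $\tilde x$).
\end{enumerate}
Applying Chiswell's rationality result and Theorem \ref{t:growth} to each such $G(\Gamma', \hh|_{\Gamma'})$ yields a polynomial-times-exponential estimate with rate $\mu_{\Gamma'}$. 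Since $\Gamma$ has only finitely many induced subgraphs, one may set $\mu = \max_{\Gamma' \subsetneq \Gamma} \mu_{\Gamma'}$ (or more precisely, the maximum over the proper subgraphs that actually occur as centraliser factors) and collect the resulting polynomial prefactors into uniform $Q, \gamma$.

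The main obstacle is verifying the strict inequality $\mu < \lambda$. This is not automatic, as removing a vertex from $\Gamma$ might conceivably leave the exponential growth rate unchanged when the ``lost'' vertex group contributes only a polynomial amount. I expect the argument to proceed by a direct combinatorial comparison of normal forms: every sufficiently long geodesic in $G$ either uses a letter of a removed vertex group -- in which case it cannot lie in $C_G(\tilde x)$ for a well-chosen cyclically reduced $\tilde x$ -- or it lies in a strictly smaller subgraph product, and counting the excess of the former over the latter shows the growth rate strictly decreases. Once this is set up, the length inflation coming from conjugation in the reduction step only affects polynomial factors (since the conjugator length is at most a fixed linear function of $|x|$ and the centraliser of the cyclic-reduction factor itself has rate $\mu_{\Gamma'} < \lambda$), and the bound passes to $C_G(x)$.
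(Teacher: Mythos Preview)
Your main claim --- a uniform centraliser bound with \emph{strictly smaller} exponential rate $\mu < \lambda$ --- is false in general, and the paper's own example shows why. Take $\Gamma = K_{k,k}$ with $\hh(v) \cong \Z$ everywhere, so $G = F_k \times F_k$. If $x$ is any non-trivial element of the first factor then $C_G(x)$ contains the second factor $\{1\} \times F_k$, whose exponential growth rate is $2k-1$. But the paper computes $\bb_G(n) \sim c\, n\, (2k-1)^n$, so $\lambda = 2k-1$ as well: the centraliser has the \emph{same} exponential rate as $G$. Thus no $\mu < \lambda$ can work, and indeed the paper points out that in this example the commutativity ratio $d_n$ decays only polynomially (like $n^{-2}$), not exponentially, which is exactly what your argument would predict if it were valid. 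Your paragraph flagging $\mu < \lambda$ as ``the main obstacle'' was well-founded, but your proposed combinatorial resolution cannot succeed: a proper special subgroup $G_{\link A}$ really can share the exponential rate of $G$.

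The paper's proof sidesteps this by tracking the full pair $(\lambda,\alpha)$ rather than $\lambda$ alone, and by arguing by contradiction. Assuming $\dc_X(G) > 0$, one extracts a subset $A \subseteq V(\Gamma)$ for which (i) many elements $g$ with $\supp(\tilde g) = A$ contribute, forcing $G_A$ to satisfy $\sss_{G_A}(n) \asymp n^\alpha \lambda^n$, and (ii) centralisers of such $g$ are large; since $C_G(\tilde g) \cong H_1 \times \cdots \times H_k \times G_{\link A}$ with the $H_i$ of uniformly polynomial growth, this forces $\sss_{G_{\link A}}(n) \asymp n^\alpha \lambda^n$ too. The punchline is then a product-growth lemma: if $H,K \leq G$ have $H \times K \leq G$, word lengths add, and both satisfy $\sss \asymp n^\alpha \lambda^n$ with the \emph{same} $(\lambda,\alpha)$, then $\sss_{H \times K}(n)/n^\alpha\lambda^n \to \infty$. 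Applying this to $H = G_A$, $K = G_{\link A}$ contradicts the upper bound $\sss_G(n) \leq D n^\alpha \lambda^n$. The essential idea you are missing is that the contradiction happens at the level of the polynomial prefactor $n^\alpha$, not the exponential rate.
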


\begin{rmk}
Theorem \ref{t:dc} is enough to confirm \cite[Conjecture 1.6]{amv} in this setting: that is, either $G = G(\Gamma,\hh)$ is virtually abelian, or $\dc_X(G) = 0$. Indeed, $G(\Gamma,\hh)$ has subexponential growth if and only if all the $\hh(v)$ have subexponential growth, the complement $\Gamma^C$ of $\Gamma$ contains no length $2$ paths, and $\hh(v) \cong C_2$ for every non-isolated vertex $v$ of $\Gamma^C$. In this case, rationality of $s_{\hh(v),X(v)}(t)$ implies that the $\hh(v)$ all have polynomial growth (by Theorem \ref{t:growth}, for instance). Thus $G(\Gamma,\hh)$ is a direct product of groups of polynomial growth: namely, the group $\hh(v)$ for each isolated vertex $v$ of $\Gamma^C$, and an infinite dihedral group for each edge in $\Gamma^C$. Consequently, $G(\Gamma,\hh)$ itself has polynomial growth, and so \cite[Corollary 1.5]{amv} implies that either $G(\Gamma,\hh)$ is virtually abelian, or $\dc_X(G(\Gamma,\hh)) = 0$.
\end{rmk}

Cases of particular interest of Theorem \ref{t:dc} include right-angled Artin groups and graph products of finite groups. More generally, let us note two special cases of pairs of $(G,X)$ satisfying Definition \ref{d:rp}:
\begin{enumerate}[(i)]
\item Let $G$ be virtually nilpotent, and $X$ be a finite generating set with $s_{G,X}(t)$ rational: in particular, this holds whenever $G$ is virtually abelian \cite{benson} and for $G = H_1$, the integral Heisenberg group \cite{duchin}. It was shown that by Wolf \cite{wolf} that if $G$ is virtually nilpotent then it has polynomial growth (by Gromov's Theorem \cite{gromov81}, the converse is also true), and so part \eqref{i:drp-sc} of Definition \ref{d:rp} holds trivially by bounding growth of centralisers by the growth of $G$ itself.
\item Let $G$ be a torsion-free hyperbolic group, and $X$ be any finite generating set. Cannon \cite{cannon} and Gromov \cite[Theorem 8.5.N]{gromov87} have shown that hyperbolic groups have rational growth with respect to any generating set, and all infinite-order elements have virtually cyclic centralisers. Moreover, for any torsion-free hyperbolic group $G$ with a finite generating set $X$, there is a constant $P > 0$ such that $|C_G(g) \cap B_{G,X}(n)| \leq Pn$ for all $n \geq 1$ and all non-trivial $g \in G$: see the proof of Theorem 1.7 in \cite{amv} for details and references.
\end{enumerate}

The paper is structured as follows. Section \ref{s:growth} applies to all infinite groups with rational spherical growth series and is dedicated to a proof of Theorem \ref{t:growth}. Section \ref{s:dc} is used to prove Theorem \ref{t:dc}.

\begin{ack}
The author would like to give special thanks to his Ph.D.\ supervisor, Armando Martino, without whose help and guidance this paper would not have been possible. He would also like to thank Yago Antol\'in, Charles Cox and Enric Ventura for valuable discussions and advice, as well as Ashot Minasyan and anonymous referees for their comments on this manuscript. Finally, the author would like to give credit to Gerald Williams for a question which led to generalising a previous version of Theorem \ref{t:dc}. The author was funded by EPSRC Studentship 1807335.
\end{ack}

\section{Groups with rational growth series}
\label{s:growth}

This section provides a proof of Theorem \ref{t:growth}. Let $G$ be an infinite group, and suppose that the growth series of $G$ with respect to a finite generating set $X$ is a rational function. In particular, the spherical growth series is
\begin{equation*}
s(t) = s_{G,X}(t) = \sum_{n=0}^\infty \sss(n)t^n = \frac{p(t)}{q(t)}
\end{equation*}
where $\sss(n) = \sss_G(n) = \sss_{G,X}(n) := |S_{G,X}(n)|$, and
\begin{equation*}
q(t) = q_0 t^c \prod_{i=1}^r (1-\lambda_i t)^{\alpha_i+1} \quad \text{and} \quad p(t) = p_0 t^{\tilde{c}} \prod_{i=1}^{\tilde{r}} (1-\tilde{\lambda}_i t)^{\tilde{\alpha}_i+1}
\end{equation*}
are non-zero polynomials with no common roots (and so either $c = 0$ or $\tilde{c} = 0$), with $\alpha_i, \tilde{\alpha}_i \in \Z_{\geq 0}$ for all $i$. Since the series $(\sss(n))_{n=0}^\infty$ grows at most exponentially, $s(t)$ is analytic (and so continuous) at $0$, hence one has
\begin{equation*}
1 = \sss(0) = \lim_{t \to 0} s(t) = \frac{p_0}{q_0} \lim_{t \to 0} t^{\tilde{c}-c}
\end{equation*}
and so $c = \tilde{c}$ and $p_0 = q_0$. Thus $c = \tilde{c} = 0$ and, without loss of generality, $q_0 = p_0 = 1$.

Coefficients of such a series are described in \cite[Lemma 1]{lee}; in particular, it follows that
\begin{equation}
\label{e:exprss}
\sss(n) = \sum_{i=1}^r \sum_{j=0}^{\alpha_i} b_{i,j} n^j \lambda_i^n
\end{equation}
for $n$ large enough, with $b_{i,\alpha_i} \neq 0$ for all $i$.

Now consider the terms of \eqref{e:exprss} that give a non-negligible contribution to $\sss(n)$ for large $n$. In particular, one may assume without loss of generality that
\begin{equation*}
\lambda := |\lambda_1| = |\lambda_2| = \cdots = |\lambda_{\tilde{k}}| > |\lambda_{\tilde{k}+1}| \geq |\lambda_{\tilde{k}+2}| \geq \cdots \geq |\lambda_r|
\end{equation*}
for some $\tilde{k} \leq r$ and that
\begin{equation*}
\alpha := \alpha_1 = \alpha_2 = \cdots = \alpha_k > \alpha_{k+1} \geq \alpha_{k+2} \geq \cdots \geq \alpha_{\tilde{k}}
\end{equation*}
for some $k \leq \tilde{k}$. Note that one must have $\lambda \geq 1$: otherwise the radius of convergence of $s(t)$ is $\lambda^{-1}>1$ and so the series $\sum_n \sss(n)$ converges, contradicting the fact that $G$ is infinite.

For $n \in \Z_{\geq 0}$, define
\begin{equation*}
c_n = \sum_{j=1}^k b_{j,\alpha} \exp(i \varphi_j n)
\end{equation*}
where $\lambda_j = \lambda \exp(i \varphi_j)$ for some $\varphi_j \in (-\pi,\pi]$, for $1 \leq j \leq k$. It follows that
\begin{equation}
\label{e:ancn}
\sss(n) = n^\alpha \lambda^n (c_n+o(1))
\end{equation}
as $n \to \infty$. In particular, since $\sss(n) \in (0,\infty) \subseteq \R$ for all $n$, it follows that
\begin{equation}
\label{e:lims}
\liminf_{n \to \infty} \Re(c_n) \geq 0 \quad \text{and} \quad \lim_{n \to \infty} \Im(c_n) = 0.
\end{equation}

It is clear that
\begin{equation*}
\limsup_{n \to \infty} \frac{\sss(n)}{n^\alpha \lambda^n} \leq \sum_{j=1}^k |b_{j,\alpha}|,
\end{equation*}
which shows existence of the constant $D$ in Theorem \ref{t:growth}; in order to prove the Proposition, it is enough to show that $\liminf_{n \to \infty} \sss(n)/(n^\alpha \lambda^n) > 0$. However, this bound does not follow solely from the fact that $s(t)$ is a rational function: see Example \ref{ex:liminf} \eqref{i:ex-sm} at the end of this section.
\begin{rmk}
\label{r:submult}
Clearly, for any $n_1,n_2 \geq 0$, if $g \in G$ has $|g|_X = n_1+n_2$ (respectively $|g|_X \leq n_1+n_2$), then one can write $g = g_1g_2$ where $|g_j|_X = n_j$ (respectively $|g_j|_X \leq n_j$) for $j \in \{1,2\}$. This gives injections $S(n_1+n_2) \to S(n_1) \times S(n_2)$ and $B(n_1+n_2) \to B(n_1) \times B(n_2)$ by mapping $g \mapsto (g_1,g_2)$. In particular, it follows that
\begin{equation*}
\sss(n_1+n_2) \leq \sss(n_1)\sss(n_2) \quad \text{and} \quad \bb(n_1+n_2) \leq \bb(n_1)\bb(n_2)
\end{equation*}
for any $n_1,n_2 \in \Z_{\geq 0}$. This property is called \emph{submultiplicativity} of sphere and ball sizes in $G$.
\end{rmk}

The aim is now to show that submultiplicativity of the sequence $(\sss(n))_{n=0}^\infty$, together with rationality of $s(t)$, implies the conclusion of Theorem \ref{t:growth}. As the $b_{j,\alpha}$ are non-zero and the $\varphi_j$ are distinct, given \eqref{e:lims} the following result seems highly likely:

\begin{lem}
\label{l:rd}
The numbers $c_n$ are real, and for some constant $\delta > 0$, the set
\begin{equation*}
E_\delta := \{ n \in \Z_{\geq 0} \mid c_n \geq \delta \}
\end{equation*}
is relatively dense in $[0,\infty)$, i.e.\ the inclusion $E_\delta \hookrightarrow [0,\infty)$ is a $(1,K)$-quasi-isometry for some $K \geq 0$.
\end{lem}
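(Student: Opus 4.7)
The claim splits into three steps: (i) $c_n \in \R$ for every $n$; (ii) $c_n \geq 0$ for every $n$, and in particular $\sup_n c_n > 0$; (iii) $E_\delta$ is relatively dense for some $\delta > 0$. All three rely on the fact that $(c_n)$ is a trigonometric polynomial in $n$, hence Bohr almost periodic on $\Z$.

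For step (i), write $\Im(c_n) = \frac{1}{2i}(c_n - \overline{c_n})$ and view it as a trigonometric polynomial whose frequencies lie in $\{\pm\varphi_j : 1 \leq j \leq k\} \subseteq (-\pi,\pi]$. After collecting terms with equal frequencies modulo $2\pi$, the hypothesis $\Im(c_n) \to 0$ from \eqref{e:lims}, combined with the Cesàro orthogonality $\frac{1}{N}\sum_{n<N} e^{i\theta n} \to 0$ for $\theta \not\equiv 0 \pmod{2\pi}$ applied to $\frac{1}{N}\sum_{n<N}|\Im(c_n)|^2$, forces every combined coefficient to vanish. Writing this out case by case: whenever $\lambda_j \in \R$ (i.e.\ $\varphi_j \in \{0,\pi\}$) the coefficient $b_{j,\alpha}$ is real, and whenever $\lambda_j \notin \R$ the conjugate $\bar{\lambda}_j$ must also appear in $\{\lambda_1, \ldots, \lambda_k\}$ with coefficient $\overline{b_{j,\alpha}}$. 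Either way $c_n \in \R$.

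Steps (ii) and (iii) share a single technical input. For every $\eta > 0$, the set
\[
T_\eta := \{\tau \in \Z : |e^{i\varphi_j \tau} - 1| < \eta \text{ for all } 1 \leq j \leq k\}
\]
is relatively dense in $\Z$; this is Bohr's theorem on $\epsilon$-almost-periods of a trigonometric polynomial, equivalently Kronecker's equidistribution of the orbit $(e^{i\varphi_j \tau})_j$ on the closed subgroup of the $k$-torus it generates. For any $\tau \in T_\eta$ one has $|c_{n+\tau} - c_n| \leq \eta \sum_j |b_{j,\alpha}|$ uniformly in $n$. For step (ii), fix $n_0$ and let $\tau \in T_\eta$ tend to infinity: this gives $\liminf_{n \to \infty} c_n \leq c_{n_0} + \eta\sum_j|b_{j,\alpha}|$, which combined with $\liminf_{n \to \infty} c_n \geq 0$ from \eqref{e:lims} and $\eta \to 0$ yields $c_{n_0} \geq 0$. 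Since the $b_{j,\alpha}$ ($j \leq k$) are non-zero and the $\varphi_j$ are distinct modulo $2\pi$, $c_n$ is not identically zero, so $M := \sup_n c_n > 0$. For step (iii), pick $n_0$ with $c_{n_0} > M/2$ and $\eta$ small enough that $\eta \sum_j|b_{j,\alpha}| < M/4$. Then $c_{n_0+\tau} > M/4$ for every $\tau \in T_\eta$, so $E_{M/4}$ contains the relatively dense set $(n_0 + T_\eta) \cap \Z_{\geq 0}$, and $\delta = M/4$ works.

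The main subtlety is converting the asymptotic conditions \eqref{e:lims} into the required pointwise statements about $c_n$: the Cesàro orthogonality argument in step (i) and the Bohr/Kronecker syndeticity of $T_\eta$ in steps (ii)--(iii) are the two places where almost periodicity of $(c_n)$ does the real work.
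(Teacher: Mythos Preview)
Your proof is correct and takes a somewhat different route from the paper's.

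The paper works with uniformly almost periodic functions on $\R$ (Besicovitch's theory): it views $c(t) = \sum_j b_{j,\alpha} e^{i\varphi_j t}$ as a continuous u.a.p.\ function, then uses an auxiliary function $M\sin(\pi t)$ to show that the set of \emph{integer} $\varepsilon$-almost-periods of $c$ is relatively dense. From this it deduces both that $c_n \in \R_{\geq 0}$ (arguing by contradiction with \eqref{e:lims}) and that $E_\delta$ is relatively dense; non-triviality of $(c_n)$ is established via a Vandermonde determinant.

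You instead work directly on $\Z$. Your step (i) bypasses almost periodicity entirely: the Parseval-type identity $\lim_N \frac{1}{N}\sum_{n<N} |\Im(c_n)|^2 = \sum_\ell |a_\ell|^2$ for the combined coefficients $a_\ell$ immediately forces $\Im(c_n) \equiv 0$ from $\Im(c_n) \to 0$. For steps (ii)--(iii), your set $T_\eta$ of common Bohr almost-periods plays exactly the role of the paper's $E(c,\varepsilon) \cap \Z$, but you obtain its syndeticity directly from Kronecker/Bohr on $\Z$ rather than via the $\sin$-trick on $\R$. The non-triviality of $(c_n)$ you assert as a known fact about linear independence of distinct exponentials; the paper's Vandermonde argument is one proof, and your own Ces\`aro orthogonality from step (i) supplies another (test against $e^{-i\varphi_\ell n}$).

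Your argument is more self-contained---no appeal to Besicovitch's book---and the Parseval argument in step (i) is arguably cleaner than the paper's contradiction via almost-periods. The paper's approach has the minor advantage of treating reality and non-negativity of $c_n$ uniformly through a single almost-periodicity mechanism.
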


However, the author has been unable to come up with a straightforward proof of Lemma \ref{l:rd} without using some additional theory on `quasi-periodicity' of the sequence $(c_n)_{n=0}^\infty$. Before giving a proof, let us deduce Theorem \ref{t:growth} from Lemma \ref{l:rd}.

Assuming Lemma \ref{l:rd}, one can find $N \in \Z_{\geq 1}$ such that for all $n$, there exists a $\beta = \beta_n \in \{0,\ldots,N\}$ with $c_{n+\beta} \geq \delta$. Define
\begin{equation*}
R := \max \{ \lambda^{-\beta} \sss(\beta) \mid 0 \leq \beta \leq N \},
\end{equation*}
and let $M \in \Z_{\geq 1}$ be such that for all $n \geq M$, one has
\begin{equation*}
\sss(n) \geq n^\alpha \lambda^n \left( c_n - \frac{\delta}{2} \right)
\end{equation*}
(such an $M$ exists by \eqref{e:ancn}). Then submultiplicativity of sphere sizes implies that for all $n \geq M$,
\begin{equation*}
\begin{split}
\frac{\delta}{2} (n+\beta_n)^\alpha \lambda^{n+\beta_n} &\leq \left( c_{n+\beta_n} - \frac{\delta}{2} \right) (n+\beta_n)^\alpha \lambda^{n+\beta_n} \\ &\leq \sss(n+\beta_n) \leq \sss(n)\sss(\beta_n) \leq \sss(n) R \lambda^{\beta_n}.
\end{split}
\end{equation*}
It follows that
\begin{equation*}
\sss(n) \geq \frac{\delta}{2R} (n+\beta_n)^\alpha \lambda^n \geq \frac{\delta}{2R} n^\alpha \lambda^n
\end{equation*}
for $n \geq M$, showing that
\begin{equation*}
\liminf_{n \to \infty} \frac{\sss(n)}{n^\alpha \lambda^n} \geq \frac{\delta}{2R} > 0,
\end{equation*}
which shows existence of the constant $C > 0$ in Theorem \ref{t:growth}. Thus in order to prove Theorem \ref{t:growth} it is now enough to prove Lemma \ref{l:rd}.

\begin{proof}[Proof of Lemma \ref{l:rd}]
To prove the Lemma, one may employ a digression into a certain class of functions from $\R$ to $\C$, called `uniformly almost periodic functions'. The theory for these functions is presented in a book by Besicovitch \cite{besicovitch}.

Let $f: \R \to \C$ be a function. Given $\varepsilon > 0$, define the set $E(f,\varepsilon) \subseteq \R$ to be the set of all numbers $\tau \in \R$ (called the \emph{translation numbers} for $f$ belonging to $\varepsilon$) such that
\begin{equation*}
\sup_{x \in \R} |f(x+\tau)-f(x)| \leq \varepsilon.
\end{equation*}
The function $f$ is said to be \emph{uniformly almost periodic} (\emph{u.~a.~p.}) if, for any $\varepsilon > 0$, the set $E(f,\varepsilon)$ is relatively dense in $\R$, i.e.\ the inclusion $E(f,\varepsilon) \hookrightarrow \R$ is a $(1,K)$-quasi-isometry for some $K \geq 0$. It is easy to see that any periodic function is u.~a.~p., and that every continuous u.~a.~p.\ function is bounded.

Now note that the function
\begin{equation*}
\begin{split}
c: \R &\to \C \\ t &\mapsto \sum_{j=1}^k b_{j,\alpha} \exp(i \varphi_j t)
\end{split}
\end{equation*}
is a sum of continuous periodic functions, and so is a continuous u.~a.~p.\ function by \cite[Section 1.1, Theorem 12]{besicovitch}. By definition, $c_n = c(n)$ for any $n \in \Z_{\geq 0}$.

The aim is to show that the function $\bar{c}: t \mapsto c(\lfloor t \rfloor)$ is also u.~a.~p. For this, note that $c$ is everywhere differentiable and the derivative $c'(t)$ is a sum of continuous periodic functions, so is continuous and u.~a.~p.\ -- in particular, it is bounded, by some $R > 0$, say. For a given $\varepsilon \in (0,R)$, set a constant $M := \varepsilon / \left( 2\sin\left(\frac{\pi\varepsilon}{2R}\right) \right)$ and define $f: \R \to \R$ by $f(t) = M\sin(\pi t)$. It is easy to check that
\begin{equation}
\label{e:earoundz}
E\left(f,\frac{\varepsilon}{2}\right) \subseteq \bigcup_{n \in \Z} \left[ n-\frac{\varepsilon}{2R}, n+\frac{\varepsilon}{2R} \right].
\end{equation}

For any $\tau \in \R$, define $n_\tau = \left\lfloor \tau+\frac{1}{2} \right\rfloor \in \Z$ to be the nearest integer to $\tau$. Pick $\tau \in E\left(f,\frac{\varepsilon}{2}\right) \cap E\left(c,\frac{\varepsilon}{2}\right)$ -- then $|c(x+\tau)-c(x)| \leq \frac{\varepsilon}{2}$ for all $x \in \R$, and, by \eqref{e:earoundz}, $|\tau-n_\tau| \leq \frac{\varepsilon}{2R}$, so in particular $|c(x+\tau)-c(x+n_\tau)| \leq \frac{\varepsilon}{2}$ for all $x \in \R$ by the choice of $R$. Thus $|c(x+n_\tau)-c(x)| \leq \varepsilon$ for all $x \in \R$, i.e.\ $n_\tau \in E(c,\varepsilon)$.

But by \cite[Section 1.1, Theorem 11]{besicovitch}, the set $E\left(f,\frac{\varepsilon}{2}\right) \cap E\left(c,\frac{\varepsilon}{2}\right)$ is relatively dense, hence (by the previous paragraph) so is the set $E(c,\varepsilon) \cap \Z$. However, for any $n \in E(c,\varepsilon) \cap \Z$ and any $x \in \R$ one has
\begin{equation*}
|\bar{c}(x+n)-\bar{c}(x)| = |c(\lfloor x+n \rfloor) - c(\lfloor x \rfloor)| = |c(\lfloor x \rfloor+n) - c(\lfloor x \rfloor)| \leq \varepsilon
\end{equation*}
and so $E(c,\varepsilon) \cap \Z \subseteq E(\bar{c},\varepsilon) \cap \Z$. It follows that $E(\bar{c},\varepsilon) \cap \Z$ is relatively dense (and so the function $\bar{c}: t \mapsto c(\lfloor t \rfloor)$ is u.~a.~p.).

Now recall that \eqref{e:lims} provides constraints for limits of sequences $(\Re(c_n))$ and $(\Im(c_n))$: namely,
\begin{equation} \label{e:lims2}
\liminf_{n \to \infty} \Re(c_n) \geq 0 \quad \text{and} \quad \lim_{n \to \infty} \Im(c_n) = 0.
\end{equation}
It is easy to see that $c_n \in \R_{\geq 0}$ for all $n$: indeed, if either $\Re(c_n) = -\delta < 0$ or $|\Im(c_n)| = \delta > 0$ for some $n$ then the fact that the set $E(\bar{c},\delta/2) \cap \Z$ is relatively dense contradicts \eqref{e:lims2}. Similarly, if $c_N > 0$ for some $N$ then the set $E(\bar{c},\delta) \cap \Z$ is a relatively dense set contained in the set $\{ n \in \Z \mid c(n) \geq \delta \}$, where $\delta = c_N/2$. To prove Lemma \ref{l:rd} it is therefore enough to show that the sequence $(c_n)_{n=0}^\infty$ is not identically zero.

Now recall that the sequence $(c_n)$ is defined by
\begin{equation*}
c_n = \sum_{j=1}^k b_{j,\alpha} \exp(i\varphi_jn),
\end{equation*}
and suppose for contradiction that $c_n = 0$ for all $n \in \Z_{\geq 0}$, and in particular for $0 \leq n \leq k-1$.
This is the same as saying that $Mv = 0$, where
\begin{equation*}
M = \begin{pmatrix}
1 & 1 & \cdots & 1 \\
\exp(i\varphi_1) & \exp(i\varphi_2) & \cdots & \exp(i\varphi_k) \\
\vdots & \vdots & \ddots & \vdots \\
\exp(i\varphi_1)^{k-1} & \exp(i\varphi_2)^{k-1} & \cdots & \exp(i\varphi_k)^{k-1}
\end{pmatrix}
\end{equation*}
and
\begin{equation*}
v = \begin{pmatrix} b_{1,\alpha-1} \\ b_{2,\alpha-1} \\ \vdots \\ b_{k,\alpha-1} \end{pmatrix}.
\end{equation*}
Thus $M$ has a zero eigenvalue and so $\det M = 0$. But $M^t$ is a Vandermonde matrix with pairwise distinct rows, so $\det M \neq 0$. This gives a contradiction which completes the proof.
\end{proof}

\begin{rmk}
A stronger conclusion of Theorem \ref{t:growth} holds if in addition $s_{G,X}(t)$ is a \emph{positive} rational function, i.e.\ it is contained in the smallest sub-semiring of $\C(t)$ containing the semiring $\Z_{\geq 0}[t]$ and closed under quasi-inversion, $f(t) \mapsto (1-f(t))^{-1}$ (for $f(t) \in \C(t)$ with $f(0) = 0$). This is the case in particular if there exists a language $\mathcal{L}$ in $(X \cup X^{-1})^\ast$ that is regular (i.e.\ recognised by a finite state automaton), the monoid homomorphism $\Phi: \mathcal{L} \to G$ extending the inclusion $X \cup X^{-1} \hookrightarrow G$ is a bijection, and $\mathcal{L}$ consists only of geodesic words in the Cayley graph of $G$ with respect to $X$, i.e.\ the length of any word $l \in \mathcal{L}$ is $|\Phi(l)|_X$. If $s_{G,X}(t)$ is a positive rational function, then the numbers $\varphi_j$ above are in fact rational multiples of $\pi$ \cite{berstel}, and as a consequence the sequence $(c_n)$ is periodic. %This strengthens the conclusion of Theorem \ref{t:growth}: instead of saying that the sequence $\left( \frac{|S_{G,X}(n)|}{n^\alpha \lambda^n} \right)$ has (non-infinite) upper and (non-zero) lower bounds, it now says that it has finitely many (non-zero and non-infinite) limit points.

However, the author has not been able to find a reason why the function $s_{G,X}(t)$, in case it is rational, must also be positive. In particular, one can find pairs $(G,X)$ such that $s_{G,X}(t)$ is rational but there are no regular languages $\mathcal{L}$ as above, and one can even find groups $G$ such that this holds for $(G,X)$ for any generating set $X$. For instance, it can be shown that growth of the 2-step nilpotent Heisenberg group
\begin{equation*}
G = H_3 = \langle a,b,c \mid [a,b]=c, [a,c]=[b,c]=1 \rangle
\end{equation*}
is rational with respect to any generating set \cite[Theorem 1]{duchin}, but there are no languages $\mathcal{L}$ as above when $G$ is a 2-step nilpotent group that is not virtually abelian \cite[Corollary 3]{stoll95}.
\end{rmk}

It is easy to check that the conclusion of Theorem \ref{t:growth} implies that
\begin{equation} \label{e:ballbounds}
\liminf_{n \to \infty} \frac{|B_{G,X}(n)|}{n^{\hat\alpha} \lambda^n} > 0 \qquad \text{and} \qquad \limsup_{n \to \infty} \frac{|B_{G,X}(n)|}{n^{\hat\alpha} \lambda^n} < \infty,
\end{equation}
where $\hat\alpha = \alpha+1$ if $\lambda = 1$ and $\hat\alpha = \alpha$ otherwise. Asymptotics similar to these have been obtained for nilpotent groups, even without the condition on rational growth. In particular, in \cite{pansu} Pansu showed that given a nilpotent group $G$ with a finite generating set $X$, there exists $\hat\alpha \in \Z_{\geq 0}$ such that $\frac{|B_{G,X}(n)|}{n^{\hat\alpha}} \to C$ as $n \to \infty$ for some $C > 0$. Moreover, in \cite{stoll96} Stoll calculates the constant $C$ for certain $2$-step nilpotent groups $G$ explicitly to show that the corresponding growth series $s_{G,X}(t)$ \emph{cannot} be rational.
However, in general -- for groups that are not virtually nilpotent -- one cannot expect $\limsup$ and $\liminf$ in \eqref{e:ballbounds} to be equal, as the hyperbolic group $C_2 * C_3$ shows: see \cite[\S 3]{griha}.

Finally, note that the same proof indeed shows a more general result:
\begin{thm} \label{t:growth2}
Let $(a_n)_{n=0}^\infty$ be a submultiplicative sequence of numbers in $\Z_{\geq 1}$ such that $s(t) = \sum a_nt^n$ is a rational function. Then there exist constants $\alpha \in \Z_{\geq 0}$, $\lambda \in [1,\infty)$ and $D > C > 0$ such that for all $n \geq 1$,
\begin{equation*}
Cn^\alpha \lambda^n \leq a_n \leq Dn^\alpha \lambda^n.
\end{equation*}
\end{thm}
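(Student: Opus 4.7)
The plan is to follow the proof of Theorem \ref{t:growth} essentially verbatim, observing that the group-theoretic input was used in only three places: (a) rationality of the generating series, (b) submultiplicativity of the coefficients (Remark \ref{r:submult}), and (c) positivity of the coefficients together with the divergence of the formal sum $\sum \sss(n)$. All three ingredients are available under the hypotheses of Theorem \ref{t:growth2}: (a) is assumed, (b) is assumed, and (c) follows from $a_n \in \Z_{\geq 1}$, which forces $\sum a_n = \infty$.

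First I would establish the analogue of the partial-fraction expansion \eqref{e:exprss}. Submultiplicativity yields $a_n \leq a_1^n$, so $s(t) = \sum a_n t^n$ has positive radius of convergence and is in particular analytic at $0$. Writing $s(t) = p(t)/q(t)$ in lowest terms and repeating the argument that opens Section \ref{s:growth} (using $a_0 \geq 1$ in place of $\sss(0) = 1$, which only serves to rule out a pole or zero at $t = 0$), one factors $q(t) = \prod_i(1 - \lambda_i t)^{\alpha_i+1}$ and obtains
\begin{equation*}
a_n = \sum_{i=1}^r \sum_{j=0}^{\alpha_i} b_{i,j} n^j \lambda_i^n
\end{equation*}
for all sufficiently large $n$, with $b_{i,\alpha_i} \neq 0$. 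Since $a_n \geq 1$ gives $\sum a_n = \infty$, the largest modulus $\lambda := \max_i|\lambda_i|$ satisfies $\lambda \geq 1$, and one defines $\alpha$, the indices $1 \leq j \leq k$, the arguments $\varphi_j$, and the sequence $c_n = \sum_{j=1}^k b_{j,\alpha} \exp(i\varphi_j n)$ exactly as before, obtaining
\begin{equation*}
a_n = n^\alpha \lambda^n (c_n + o(1)).
\end{equation*}
This immediately yields the upper bound $\limsup a_n/(n^\alpha \lambda^n) \leq \sum_{j=1}^k |b_{j,\alpha}| < \infty$.

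For the lower bound I would appeal to Lemma \ref{l:rd}. Inspecting its proof, the only properties of the $\sss(n)$ used are that they are non-negative reals (to constrain $\liminf \Re(c_n) \geq 0$ and $\lim \Im(c_n) = 0$ as in \eqref{e:lims2}) and that the sequence $(c_n)$ is not identically zero (a Vandermonde argument using only the definition of the $c_n$). Both hold here, so the conclusion of Lemma \ref{l:rd} applies to our $(c_n)$: the set $E_\delta = \{n \in \Z_{\geq 0} \mid c_n \geq \delta\}$ is relatively dense in $[0,\infty)$ for some $\delta > 0$. Finally, the passage from Lemma \ref{l:rd} to the lower bound in Theorem \ref{t:growth} uses \emph{only} submultiplicativity of the sequence — the chain of inequalities
\begin{equation*}
\tfrac{\delta}{2}(n+\beta_n)^\alpha \lambda^{n+\beta_n} \leq a_{n+\beta_n} \leq a_n a_{\beta_n} \leq a_n R \lambda^{\beta_n}
\end{equation*}
where $R := \max\{\lambda^{-\beta} a_\beta : 0 \leq \beta \leq N\}$ — so the same deduction yields $\liminf a_n/(n^\alpha \lambda^n) \geq \delta/(2R) > 0$.

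The main obstacle is essentially bookkeeping: making sure that every appeal to $\sss(0) = 1$ or to the geometric meaning of submultiplicativity in Remark \ref{r:submult} can be replaced by the abstract hypotheses. Since the proof in Section \ref{s:growth} is already written in a way that cleanly separates these ingredients, the adaptation is routine and the statement of Theorem \ref{t:growth2} follows.
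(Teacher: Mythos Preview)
Your proposal is correct and matches the paper's own treatment: the paper simply asserts that ``the same proof indeed shows a more general result'' without further elaboration, and you have accurately identified and verified the three abstract ingredients (rationality, submultiplicativity, and $a_n \in \Z_{\geq 1}$ forcing $\sum a_n = \infty$ and $a_0 \neq 0$) that replace the group-theoretic input in the argument of Section~\ref{s:growth}.
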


The example below shows that both submultiplicativity and rationality are necessary requirements.
\begin{ex} \label{ex:liminf} \begin{enumerate}[(i)]
\item \label{i:ex-sm} Let $$p(t) = 1 + 12t^2 - 16t^3$$ and $$q(t) = (1-t)(1-2t)(1-2\omega t)(1-2\bar{\omega}t),$$ where $\omega$ is a $6^\text{th}$ primitive root of unity. Let $s(t)$, $(a_n)$, $\lambda$, $\alpha$ and $(c_n)$ be as above. Then $\lambda = 2$ and $\alpha = 0$, and \cite[Lemma 1]{lee} can be used to calculate
\begin{equation*}
a_n = c_n2^n + 1
\end{equation*}
where
\begin{equation*}
c_n = 4 - 2\omega^n - 2\bar{\omega}^n = \begin{cases} 0, & n \equiv 0 \pmod{6}, \\ 2, & n \equiv \pm 1 \pmod{6}, \\ 6, & n \equiv \pm 2 \pmod{6}, \\ 8, & n \equiv 3 \pmod{6}. \end{cases}
\end{equation*}
But as $c_n = 0$ for infinitely many values of $n$, one has $$\liminf_{n \to \infty} a_n/(n^\alpha \lambda^n) = 0.$$ Note that in this case $a_7 = 257 > 5 = a_1a_6$, so the sequence $(a_n)$ is not submultiplicative.
\item \label{i:ex-rat} For $n \geq 0$, let $a_n = 2^{b(n)}$, where $b(n)$ is the sum of digits in the binary representation of $n$. Then $(a_n)$ is a submultiplicative sequence, but $\sum a_n t^n$ is not a rational function. For each $n \geq 0$, one has $a_{2^n-1} = 2^n$ and $a_{2^n} = 2$. Thus $$\liminf_{n \to \infty} \frac{a_n}{n} \leq \liminf_{n \to \infty} \frac{2}{2^n} = 0$$ and $$\limsup_{n \to \infty} a_n \geq \limsup_{n \to \infty} 2^n = \infty,$$ so $(a_n)$ does not satisfy the conclusion of Theorem \ref{t:growth2} for any $\lambda \geq 1$ and $\alpha \in \Z_{\geq 0}$.
\end{enumerate}
\end{ex}

\section{Degree of commutativity}
\label{s:dc}

The aim of this section is to prove Theorem \ref{t:dc}. For this, let $\Gamma$ be a finite simple graph and for each $v \in V(\Gamma)$, let $(\hh(v),X(v))$ be a rational pair with small centralisers (see Definition \ref{d:rp}). To simplify notation, suppose in addition that the sets $X(v)$ are symmetric and do \emph{not} contain the identity $1 \in \hh(v)$: clearly this does not affect the results. Suppose in addition that $G = G(\Gamma,\hh)$ is a group of exponential growth. One thus aims to show that $\dc_{X}(G) = 0$, where $X = \bigsqcup_{v \in V(\Gamma)} X(v)$.

\subsection{Preliminaries}
\label{ss:prelim}

This subsection collects the terminology and preliminary results used in the proof of Theorem \ref{t:dc}.

Let $\ell_n: X^\ast \to \Z_{\geq 0}$ be the \emph{normal form length} function ($n$ in $\ell_n$ stands for `normal'): for $w \in X^\ast$, set $\ell_n(w) := m$ where $m$ is the minimal integer for which $w \equiv w_1w_2 \cdots w_m$ as words, where $w_i \in X(v_i)^\ast$ for some $v_i \in V(\Gamma)$. Moreover, let $\ell_w: X^\ast \to \Z_{\geq 0}$ be the \emph{word length} function ($w$ in $\ell_w$ stands for `word'), i.e.\ let $\ell_w(w)$ be the number of letters in $w \in X^\ast$.

The following result says that given any word $w \in X^\ast$ representing $g \in G$, there is a simple algorithm to transform it into a word $\hat{w}$ representing $g$ with $\ell_n(\hat{w})$ or $\ell_w(\hat{w})$ small. This follows quite easily from a result of Green \cite{green}.

\begin{prop} \label{p:wbeh}
Let $\ell: X^\ast \to \Z_{\geq 0}$ be either $\ell = \ell_n$ or $\ell = \ell_w$. Let $w \in X^\ast$ be a word representing an element $g \in G$, and let $\hat{w}$ be a word representing $g$ with $(\ell(\hat{w}),\ell_w(\hat{w}))$ minimal (in the lexicographical ordering) among such words. Then $\hat{w}$ can be obtained from $w$ by applying a sequence of moves of two types:
\begin{enumerate}[(i)]
\item \label{i:move1} for some $w_u \in X(u)^\ast$ and $w_v \in X(v)^\ast$ with $\{ u,v \} \in E(\Gamma)$, replacing a subword $w_uw_v$ with $w_vw_u$;
\item \label{i:move2} for some $v \in V(\Gamma)$ and some subword $w_1 \in X(v)^\ast$, replacing the subword $w_1$ with a word $w_0 \in X(v)^\ast$ representing the same element in $\hh(v)$, such that $\ell_w(w_0) \leq \ell_w(w_1)$.
\end{enumerate}
\end{prop}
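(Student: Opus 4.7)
The plan is to invoke Green's normal form theorem~\cite{green} for graph products. Call a word $v \in X^\ast$ \emph{reduced} if no sequence of moves of type (i) applied to $v$ produces a word containing a subword $w_1 \in X(u)^\ast$ (for some $u \in V(\Gamma)$) that can be replaced, via a single application of move (ii), by a strictly shorter word $w_0 \in X(u)^\ast$ representing the same element of $\hh(u)$. Green's theorem then provides two facts: (A) any word can be brought to a reduced representative of the same element by a finite sequence of moves of types (i) and (ii); and (B) any two reduced representatives of the same element in $G$ differ only by a sequence of moves of type (i).

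The argument then runs as follows. First I would verify that $\hat{w}$ itself is reduced: if it were not, then by the definition one could apply commutations to $\hat{w}$ and then strictly shorten a syllable, obtaining a word $w'$ representing $g$ with $\ell_w(w') < \ell_w(\hat{w})$. A short case-check shows that moves (i) and (ii) can only decrease or preserve $\ell_n$, so $\ell_n(w') \leq \ell_n(\hat{w})$. This contradicts the lex-minimality of $(\ell(\hat{w}), \ell_w(\hat{w}))$, whether $\ell = \ell_n$ or $\ell = \ell_w$. Next I would apply (A) to reduce the given word $w$ to a reduced word $w^\ast$ representing $g$, via moves (i) and (ii). Since both $w^\ast$ and $\hat{w}$ are reduced representatives of $g$, (B) yields a sequence of moves of type (i) transforming $w^\ast$ into $\hat{w}$, and concatenating gives the required sequence $w \to \cdots \to w^\ast \to \cdots \to \hat{w}$.

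The main obstacle, as suggested by the paper, is the precise invocation of Green's theorem; once it is available, the rest is bookkeeping. The auxiliary monotonicity of $\ell_n$ under moves (i) and (ii) is easy to check: move (ii) either shortens a single syllable (and possibly, when $w_0$ is empty, allows adjacent syllables from commuting vertex groups to merge after applying further moves of type (i)), while move (i) swaps two adjacent syllables, which may cause syllables on either side to merge but never introduces a new syllable boundary.
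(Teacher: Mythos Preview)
Your overall strategy is right, but two of the claims you rely on are false as stated.

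First, moves of type (i) can increase $\ell_n$. The move swaps subwords $w_u \in X(u)^\ast$ and $w_v \in X(v)^\ast$, and nothing forces these to be full syllables: if $x, y \in X(u)$ and $b \in X(v)$ with $\{u,v\} \in E(\Gamma)$, then $xyb \to xby$ (taking $w_u = y$, $w_v = b$) raises $\ell_n$ from $2$ to $3$. So your last-paragraph justification (``never introduces a new syllable boundary'') is wrong, and in the case $\ell = \ell_n$ your argument that $\hat w$ is reduced breaks down: the word $w'$ you construct need not satisfy $\ell_n(w') \leq \ell_n(\hat w)$. The conclusion is still true---for instance, since $\hat w$ has minimal $\ell_n$ it is already syllable-reduced, so type (i) moves can only split its syllables, never merge distinct ones; as $\hat w$ also has minimal $\ell_w$ among syllable-reduced words, each syllable is geodesic in its vertex group, and subwords of geodesics are geodesics---but this extra argument must be supplied.

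Second, your (B) is false: reduced representatives can differ by length-preserving moves of type (ii), not just type (i). If $\hh(v) = \Z^2$ with standard generators $a,b$ and $\Gamma$ has the single vertex $v$, then the words $ab$ and $ba$ are both reduced in your sense and represent the same element, yet no type (i) move is available at all. What Green's theorem actually yields at the word level is that two reduced representatives are connected by type (i) moves together with length-preserving type (ii) moves; with this amendment your passage from $w^\ast$ to $\hat w$ goes through. The paper's own proof sidesteps both issues by handling the two cases separately: for $\ell = \ell_n$ it cites Green to transform $w$ into a word with exactly the syllable structure of $\hat w$ and then adjusts each syllable by a single type (ii) move; for $\ell = \ell_w$ it bootstraps from the first case using the reversibility of length-preserving moves.
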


\begin{proof}
Suppose first that $\ell = \ell_n$, and let $\hat{w} \equiv w_1 \cdots w_m$, where $w_i \in X(v_i)^\ast$ for some $v_i \in V(\Gamma)$ and $m = \ell_n(w)$. In \cite[Theorem 3.9]{green}, Green showed that by using moves \eqref{i:move1} and \eqref{i:move2} we can transform $w$ into a word $\hat{w}' \equiv w_1' \cdots w_m'$ where $w_i' \in X(v_i)^\ast$ and $w_i$, $w_i'$ represent the same element of $\hh(v)$. Notice that we have $\ell_w(w_i) \leq \ell_w(w_i')$ for each $i$: otherwise, existence of the word $w_1 \cdots w_{i-1} w_i' w_{i+1} \cdots w_m$ would contradict the minimality of $\hat{w}$. Thus a sequence of moves \eqref{i:move2} allows us to transform $\hat{w}'$ into $\hat{w}$, as required.

Suppose now that $\ell = \ell_w$. Let $\hat{w}_n \in X^*$ be a word representing $g$ with $(\ell_n(\hat{w}_n),\ell_w(\hat{w}_n))$ minimal among all such words. Then the result for $\ell = \ell_n$ says that $\hat{w}$ can be transformed into $\hat{w}_n$ by using the moves \eqref{i:move1}--\eqref{i:move2}. Notice that if $w' \in X^\ast$ is obtained from $w \in X^\ast$ by applying move \eqref{i:move1} or \eqref{i:move2}, then $\ell_w(w') \leq \ell_w(w)$, and if the equality holds then there exists a move that transforms $w'$ back into $w$. By definition of $\hat{w}$, no moves strictly decreasing the word length are used when transforming $\hat{w}$ to $\hat{w}_n$, and so there exists a sequence of moves transforming $\hat{w}_n$ into $\hat{w}$ as well. Thus we may apply moves \eqref{i:move1}--\eqref{i:move2} to obtain $\hat{w}_n$ from $w$ and subsequently $\hat{w}$ from $\hat{w}_n$, as required.
\end{proof}

Note that it follows from the proof of Proposition \ref{p:wbeh} that minimal values of $\ell_n(w)$ and $\ell_w(w)$ can be obtained simultaneously. This justifies the following:
\begin{defn}
For $g \in G$, define a \emph{normal form} of $g$ to be a word $w \in X^\ast$ with both $\ell_n(w)$ and $\ell_w(w)$ minimal (so that $\ell_w(w) = |g|_X$). Write $w = w_1 w_2 \cdots w_n$ for $w_i \in X$, and define the \emph{support} of $g$ as
\begin{equation*}
\supp(g) := \{ v \in V(\Gamma) \mid w_i \in X(v) \text{ for some } i \};
\end{equation*}
by Proposition \ref{p:wbeh} this does not depend on the choice of $w$.
\end{defn}

Now suppose for contradiction that $\dc_X(G) > 0$. That means that for some constant $\varepsilon > 0$, one has
\begin{equation} \label{e:pdef}
\sum_{g \in B(n)} \frac{|C_G(g) \cap B(n)|}{\bb(n)^2} \geq \varepsilon
\end{equation}
for infinitely many values of $n$, where $C_G(g)$ denotes the centraliser of an element $g \in G$, and $\bb(n) = \bb_G(n) = \bb_{G,X}(n) := |B_{G,X}(n)|$.

In the proof certain conjugates of elements in $G$ will be considered. In particular, let $g \in G$, and pick a conjugate $\tilde{g} \in G$ of $g$ such that $g = p_g^{-1} \tilde{g} p_g$ with $|g| = 2|p_g| + |\tilde{g}|$ and such that $|\tilde{g}|$ is minimal subject to this. If $p_g = 1$, then $g$ is called \emph{cyclically reduced}; hence $\tilde{g}$ is cyclically reduced. Note that being cyclically reduced is a weaker condition than being cyclically normal in the sense of \cite{barkauskas}.

For any subset $A \subseteq V(\Gamma)$, let $G_A$ denote $G(\Gamma(A),\hh|_A)$, where $\Gamma(A)$ is the full subgraph of $\Gamma$ spanned by $A$. These will be viewed as subgroups (called the \emph{special subgroups}) of $G$. One may also define the \emph{link} of $A$ to be
\begin{equation*}
\link A = \{ u \in V(\Gamma) \mid (u,v) \in E(\Gamma) \text{ for all } v \in A \}.
\end{equation*}

Before carrying on with the proof, consider the sequence $(d_n)_{n=0}^\infty$ where
\begin{equation*}
d_n := \frac{|\{ (x,y) \in B_{G,X}(n)^2 \mid [x,y]=1 \}|}{\bb_{G,X}(n)^2}.
\end{equation*}
One aims to show that $d_n \to 0$ as $n \to \infty$. Note that for many groups of exponential growth, including all the non-elementary hyperbolic groups \cite{amv}, the sequence $(d_n)_{n=0}^\infty$ converges to zero exponentially fast. However, the following example shows that this is not always the case for graph products. The result of Theorem \ref{t:dc} may be therefore more delicate than one might think.

\begin{ex}
Suppose $\Gamma$ is a complete bipartite graph $K_{k,k}$, i.e.\ $\Gamma$ has vertex set 
\begin{equation*}
V(\Gamma) = \{ u_1,\ldots,u_k,v_1,\ldots,v_k \}
\end{equation*}
and edge set 
\begin{equation*}
E(\Gamma) = \{ \{u_i,v_j\} \mid 1 \leq i,j \leq k \},
\end{equation*}
and let $\hh(u) \cong \Z$ with generators $X(u) = \{ x_u, x_u^{-1} \}$ for each $u \in V(\Gamma)$. In this case one has $G(\Gamma,\hh) \cong F_k \times F_k$ (direct product of two free groups of rank $k$) and so one can calculate sphere sizes in $G(\Gamma,\hh)$ and its special subgroups easily. Note that clearly (by the definition of link) every element of $G_A \leq G$ commutes with every element of $G_{\link A} \leq G$. Now consider the case where $A = \{ u_1,\ldots,u_k \}$ and so $\link A = \{ v_1,\ldots,v_k \}$. It follows that
\begin{equation*}
\{ (x,y) \in B(n)^2 \mid [x,y]=1 \} \supseteq B_{G_A}(n) \times B_{G_{\link A}}(n).
\end{equation*}
An explicit computation shows that
\begin{equation*}
\bb_{G_A}(n) = \bb_{G_{\link A}}(n) = \frac{k(2k-1)^n-1}{k-1}
\end{equation*}
and
\begin{equation*}
\bb_G(n) = \frac{2k^2n(2k-1)^n}{(k-1)(2k-1)} + e_1 (2k-1)^n + e_2
\end{equation*}
where $e_1 = e_1(k)$ and $e_2 = e_2(k)$ are some constants. It follows that
\begin{equation*}
d_n \geq \frac{\bb_{G_A}(n) \bb_{G_{\link A}}(n)}{\bb_G(n)^2} \sim \left( \frac{2k-1}{2kn} \right)^2
\end{equation*}
as $n \to \infty$. In particular, the sequence $(d_n)_{n=0}^\infty$ converges to zero only at a polynomial rate for $G = G(\Gamma,\hh)$.
\end{ex}

The proof of Theorem \ref{t:dc} is based on the fact that if \eqref{e:pdef} held for infinitely many $n$ then there would exist a subset $A \subseteq V(\Gamma)$ such that the growth of both $G_A$ and $G_{\link A}$ would be comparable to that of $G$. More precisely, the outline of the proof is as follows:
\begin{enumerate}[(i)]
\item finding such a subset $A \subseteq V(\Gamma)$ and showing that $G_A$ is not negligible in $G$, i.e.\ $\frac{\bb_{G_A}(n)}{\bb_G(n)} \nrightarrow 0$ as $n \to \infty$ (subsection \ref{ss:getA});
\item finding a collection $\mathcal{H}$ of subgroups of $G$ having (uniformly) polynomial growth such that, for all $H \in \mathcal{H}$, $G_{\link A} \times H$ is a subgroup of $G$ and $\frac{|(G_{\link A} \times H) \cap B_G(n)|}{\bb_G(n)}$ is uniformly bounded below as $n \to \infty$ (subsection \ref{ss:centr});
\item using the embedding $G_A \times G_{\link A} \subseteq G$ and Theorem \ref{t:growth} to obtain a contradiction (subsection \ref{ss:prod}).
\end{enumerate}

\subsection{A non-negligible special subgroup}
\label{ss:getA}

Note that \eqref{e:pdef} can be rewritten as
\begin{equation} \label{e:s11}
\sum_{A \subseteq V(\Gamma)} \sum_{\substack{g \in B(n) \\ \supp(\tilde{g}) = A}} \frac{|C_G(g) \cap B(n)|}{\bb(n)^2} \geq \varepsilon
\end{equation}
and so \eqref{e:s11} holds for infinitely many $n$. But as $\Gamma$ is finite, there are only $2^{|V(\Gamma)|} < \infty$ subsets of $V(\Gamma)$, thus in particular there exists a subset $A \subseteq V(\Gamma)$ such that
\begin{equation} \label{e:s12}
\sum_{\substack{g \in B(n) \\ \supp(\tilde{g}) = A}} \frac{|C_G(g) \cap B(n)|}{\bb(n)^2} \geq 2^{-|V(\Gamma)|}\varepsilon
\end{equation}
holds for infinitely many $n$. One may restrict the subset of elements $g \in G$ considered even further:

\begin{lem} \label{l:pgsmall} There exist constants $\tilde\varepsilon > 0$ and $s \in \Z_{\geq 0}$ such that
\begin{equation*}
\sum_{\substack{g \in B(n) \\ \supp(\tilde{g}) = A \\ |p_g| \leq s}} \frac{|C_G(g) \cap B(n)|}{\bb(n)^2} \geq \tilde\varepsilon
\end{equation*}
for infinitely many $n$.
\end{lem}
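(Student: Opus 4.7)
The plan is to split the sum in \eqref{e:s12} according to the length $k := |p_g|$ of the conjugator, setting
\[
T_k^{(n)} := \sum_{\substack{g \in B(n) \\ \supp(\tilde g) = A \\ |p_g| = k}} |C_G(g) \cap B(n)|,
\]
so that the left-hand side of \eqref{e:s12} equals $\sum_{k \geq 0} T_k^{(n)}/\bb(n)^2$. I will show that $T_k^{(n)}/\bb(n)^2 \leq K k^\alpha \lambda^{-k}$ for some constants $K, \alpha$ and some $\lambda > 1$ independent of $n$. Choosing $s$ large enough that $\sum_{k > s} K k^\alpha \lambda^{-k} < 2^{-|V(\Gamma)|-1}\varepsilon$ then gives the lemma with $\tilde\varepsilon := 2^{-|V(\Gamma)|-1}\varepsilon$.

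For the bound on $T_k^{(n)}$, I combine the trivial estimate $|C_G(g) \cap B(n)| \leq \bb(n)$ with a counting argument for the index set. After fixing any rule assigning a pair $(p_g, \tilde g)$ to each $g$, the map $g \mapsto (p_g, \tilde g)$ is injective, and when $|p_g| = k$ and $\supp(\tilde g) = A$ its image lies in $\{(p,h) : |p| = k,\ h \in G_A,\ |h| \leq n - 2k\}$; here the containment $\tilde g \in G_A$ follows since $\supp(\tilde g) \subseteq A$. Hence the number of summands is at most $|S(k)| \cdot |B_{G_A}(n-2k)|$, and so $T_k^{(n)} \leq |S(k)| \cdot |B_{G_A}(n-2k)| \cdot \bb(n)$.

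To turn this into the required exponential decay in $k$, I invoke Chiswell's result \cite{chiswell} that $s_{G,X}(t)$ is rational, so Theorem \ref{t:growth} applies to $G$; moreover, the standing assumption that $G$ has exponential growth forces the resulting $\lambda$ to satisfy $\lambda > 1$. This yields constants $\alpha \in \Z_{\geq 0}$ and $C, D > 0$ with $|S(k)| \leq Dk^\alpha \lambda^k$, $\bb(n) \geq Cn^\alpha \lambda^n$, and (since $G_A \leq G$) $|B_{G_A}(m)| \leq \bb(m) \leq Dm^\alpha \lambda^m$. Combining,
\[
\frac{T_k^{(n)}}{\bb(n)^2} \;\leq\; \frac{|S(k)| \cdot |B_{G_A}(n-2k)|}{\bb(n)} \;\leq\; \frac{D^2}{C} \cdot \frac{k^\alpha (n-2k)^\alpha}{n^\alpha} \lambda^{-k} \;\leq\; \frac{D^2}{C} k^\alpha \lambda^{-k},
\]
using $(n-2k)^\alpha \leq n^\alpha$. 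Since $\lambda > 1$ the tail series converges, and the previous paragraph's choice of $s$ completes the argument.

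The one delicate point I anticipate is the counting step: $(p_g, \tilde g)$ is not uniquely determined by $g$, but this is resolved by fixing any such choice once and for all, after which injectivity of $g \mapsto (p_g, \tilde g)$ is immediate. Beyond this bookkeeping I foresee no substantive obstacle, since the argument is essentially a uniform tail estimate made possible by the fact that $\bb(n)$ is exponentially larger than any polynomial-degree multiple of $|S(k)| \cdot \bb(n-2k)$ once $k$ is large.
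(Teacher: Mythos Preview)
Your proof is correct and follows essentially the same approach as the paper: both bound the tail $|p_g| > s$ by counting pairs $(p_g,\tilde g)$ via $\sss(k)\,\bb(n-2k)$, invoke Theorem~\ref{t:growth} together with exponential growth ($\lambda>1$) to obtain the bound $K k^\alpha \lambda^{-k}$, and then choose $s$ so that the convergent tail is below $2^{-|V(\Gamma)|-1}\varepsilon$. The only cosmetic differences are that you route through $|B_{G_A}(n-2k)|$ before relaxing to $\bb(n-2k)$ (the paper skips this step, not even requiring $\supp(\tilde g)=A$ in the tail estimate), and that your ball upper bound $\bb(m)\le D m^\alpha\lambda^m$ should carry the constant $\frac{D\lambda}{\lambda-1}$ rather than $D$; neither affects the argument.
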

\begin{proof}
As $G$ has rational spherical growth series by \cite{chiswell}, Theorem \ref{t:growth} says that there exist constants $\alpha \in \Z_{\geq 0}$, $\lambda \geq 1$, $C = C_G > 0$ and $D = D_G > C$ such that
\begin{equation} \label{e:Sbound}
C n^\alpha \lambda^n \leq \sss(n) \leq D n^\alpha \lambda^n
\end{equation}
for all $n \geq 1$. As it is also assumed that $G$ has exponential growth, one has $\lambda > 1$. It is easy to show that in this case
\begin{equation} \label{e:Bbound}
C n^\alpha \lambda^n < \bb(n) < \frac{D\lambda}{\lambda-1} n^\alpha \lambda^n
\end{equation}
for all $n \geq 1$.

Now one can bound the number of terms in \eqref{e:s12} corresponding to elements $g \in G$ with $|p_g|$ large (even without requiring $\supp(\tilde{g}) = A$). Indeed, as any $g \in G$ can be written as $g = p_g^{-1} \tilde{g} p_g$ with $|g| = 2|p_g| + |\tilde{g}|$, \eqref{e:Sbound} and \eqref{e:Bbound} imply
\begin{equation}
\begin{split} \label{e:s21}
\frac{1}{\bb(n)} \sum_{\substack{g \in B(n) \\ |p_g| > s}} &\frac{|C_G(g) \cap B(n)|}{\bb(n)} \leq \frac{|\{ g \in B(n) \mid |p_g| > s \}|}{\bb(n)} \leq \sum_{i=s+1}^{\left\lfloor \frac{n}{2} \right\rfloor} \frac{\sss(i) \bb(n-2i)}{\bb(n)} \\ &\leq \frac{D}{C} \left( \frac{1}{2} \right)^\alpha \lambda^{-\frac{n}{2}} + \frac{D^2 \lambda}{C(\lambda-1)} \sum_{i=s+1}^{\left\lfloor \frac{n-1}{2} \right\rfloor} \left( \frac{i(n-2i)}{n} \right)^\alpha \lambda^{-i}.
\end{split}
\end{equation}
The first term of the sum above clearly tends to zero as $n \to \infty$, and the second term is bounded above by the infinite sum $\sum_{i=s+1}^\infty i^\alpha \lambda^{-i}$, which tends to zero as $s \to \infty$ since the series $\sum_i i^\alpha \lambda^{-i}$ converges. Hence there exists a value of $s \in \Z_{\geq 0}$ which ensures that the right hand side in \eqref{e:s21} is less than $2^{-|V(\Gamma)|-1}\varepsilon$ for $n$ large enough. This means that
\begin{equation*}
\sum_{\substack{g \in B(n) \\ \supp(\tilde{g}) = A \\ |p_g| \leq s}} \frac{|C_G(g) \cap B(n)|}{\bb(n)^2} \geq 2^{-|V(\Gamma)|-1}\varepsilon
\end{equation*}
for infinitely many $n$, so setting $\tilde\varepsilon := 2^{-|V(\Gamma)|-1}\varepsilon$ completes the proof.
\end{proof}

Now note that one may write
\begin{equation*} \label{e:s31}
\begin{split}
\sum_{\substack{g \in B(n) \\ \supp(\tilde{g}) = A \\ |p_g| \leq s}} &\frac{|C_G(g) \cap B(n)|}{\bb(n)^2} \leq \frac{|\{ g \in B(n) \mid \supp(\tilde{g}) = A, |p_g| \leq s \}|}{\bb(n)} \\ &\times \max \left\{ \frac{|C_G(g) \cap B(n)|}{\bb(n)} \:\middle|\: g \in B(n), \supp(\tilde{g}) = A, |p_g| \leq s \right\}
\end{split}
\end{equation*}
where both terms in the product are bounded above by $1$. It follows by Lemma \ref{l:pgsmall} that both
\begin{equation*} \label{e:A1} \tag{$\ast$}
\frac{|\{ g \in B(n) \mid \supp(\tilde{g}) = A, |p_g| \leq s \}|}{\bb(n)} \geq \tilde\varepsilon
\end{equation*}
and
\begin{equation*} \label{e:linkA1} \tag{$\dagger$}
\max \left\{ \frac{|C_G(g) \cap B(n)|}{\bb(n)} \:\middle|\: g \in B(n), \supp(\tilde{g}) = A, |p_g| \leq s \right\} \geq \tilde\varepsilon
\end{equation*}
hold for infinitely many $n$.

The aim is now to show that \eqref{e:A1} and \eqref{e:linkA1} imply that the special subgroups $G_A$ and $G_{\link A}$ (respectively) are non-negligible in $G$. For the latter, one may consider explicit forms of centralisers of $G$: see the next subsection. For the former, note that the set in the numerator consists of elements $g \in B_G(n)$ which have an expression $g = p_g^{-1}\tilde{g}p_g$ with $p_g \in B_G(s)$ and $\tilde{g} \in B_{G_A}(n)$. It follows that
\begin{equation*}
|\{ g \in B(n) \mid \supp(\tilde{g}) = A, |p_g| \leq s \}| \leq \bb_G(s) \bb_{G_A}(n)
\end{equation*}
and so \eqref{e:A1} implies that
\begin{equation*} \label{e:A2} \tag{$\ast\ast$}
\frac{\tilde\varepsilon}{\bb_G(s)} \leq \frac{\bb_{G_A}(n)}{\bb_G(n)} \leq 1
\end{equation*}
for infinitely many $n$, where the second inequality comes from the fact that $B_{G_A}(n) \subseteq B_G(n)$.

\subsection{\texorpdfstring{Centralisers in $G$}{Centralisers in G}}
\label{ss:centr}

In order to use \eqref{e:linkA1}, one needs to consider forms of centralisers of elements $g \in G$ with $\supp(\tilde{g}) = A$. Fix an element $g \in G$ with $\supp(\tilde{g}) = A$ and note that one clearly has $C_G(g) = p_g^{-1}C_G(\tilde{g})p_g$, so if $|p_g| \leq s$ then one has
\begin{equation}
\label{e:centconj}
|C_G(g) \cap B(n)| \leq |C_G(\tilde{g}) \cap B(n+2s)|.
\end{equation}
In particular, it follows from \eqref{e:linkA1} that for infinitely many $n$, there exists an element $g \in B(n)$ with $\supp(\widetilde{g}) = A$ and $|p_g| \leq s$ such that
\begin{equation*} \label{e:linkA2} \tag{$\dagger\dagger$}
\tilde\varepsilon \leq \frac{|C_G(\tilde{g}) \cap B_G(n+2s)|}{\bb_G(n)} \leq \bb_G(2s);
\end{equation*}
here the second inequality comes from the fact that $|C_G(\tilde{g}) \cap B(n+2s)| \leq \bb(n+2s) \leq \bb(n)\bb(2s)$.

Now define an element $g \in G$ to be \emph{cyclically normal} (in the sense of \cite{barkauskas}) if either $\ell_n(g) \leq 1$, or $n := \ell_n(g) \geq 2$ and for any normal form $w = w_1 \cdots w_n \in X^\ast$ of $g$, where $w_i \in X(v_i)^\ast$ for some $v_i \in V(\Gamma)$, one has $v_1 \neq v_n$. Then one has

\begin{lem} \label{l:cn}
For any $g \in G$ with $\supp(\tilde{g}) = A$, there exists an element $\tilde{p}_g \in G_A$ such that $\hat{g} := \tilde{p}_g \tilde{g} \tilde{p}_g^{-1}$ is cyclically normal and $\supp(\hat{g}) = A$.
\end{lem}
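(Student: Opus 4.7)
The plan is to induct on $\ell_n(\tilde g)$. The base case $\ell_n(\tilde g) \leq 1$ is trivial: such elements are cyclically normal by definition, so one may take $\tilde p_g := 1$. For the inductive step, note first that $\supp(\tilde g) = A$ forces $\tilde g \in G_A$, so any conjugation by an element of $G_A$ keeps us in $G_A$. Assume $\tilde g$ is not yet cyclically normal, and fix a normal form $w_1 w_2 \cdots w_n$ of $\tilde g$ with $w_i \in X(v_i)^\ast$ and $v_1 = v_n =: v \in A$. I would then consider the conjugate $\tilde g' := w_n \tilde g w_n^{-1}$, which is represented by the word $(w_n w_1) w_2 \cdots w_{n-1}$ in which the first syllable $w_n w_1$ lies in $X(v)^\ast$, while $v \neq v_2$ and $v_{n-1} \neq v$ follow from the fact that $n = \ell_n(\tilde g)$ is the minimal syllable count.

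The key claim driving the induction is that $\ell_w(w_n w_1) = \ell_w(w_n) + \ell_w(w_1)$ inside $\hh(v)$, and that $w_n w_1 \neq 1$. If either of these failed, then the above word representation of $\tilde g'$ would have strictly fewer letters than $|\tilde g|$, producing a conjugate of $g$ of word length smaller than $|\tilde g|$ and contradicting the minimality built into the choice of $\tilde g$ (i.e.\ the fact that $\tilde g$ is cyclically reduced). Granting the claim, $(w_n w_1) w_2 \cdots w_{n-1}$ is a genuine normal form of $\tilde g'$ of length $n-1$, so $\ell_n(\tilde g') < \ell_n(\tilde g)$; moreover $|\tilde g'| = |\tilde g|$, so $\tilde g'$ is itself cyclically reduced, and its support equals $\{v, v_2, \ldots, v_{n-1}\} = \{v_1, v_2, \ldots, v_n\} = A$ because $v = v_1 = v_n$.

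Applying the inductive hypothesis to $\tilde g'$ yields $q \in G_A$ such that $q \tilde g' q^{-1}$ is cyclically normal with support $A$. Setting $\tilde p_g := q w_n$, which lies in $G_A$ since $w_n \in X(v)^\ast \subseteq G_A$, one obtains $\tilde p_g \tilde g \tilde p_g^{-1} = q \tilde g' q^{-1}$, which has the required properties.

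The main obstacle is the non-cancellation step for $w_n w_1$: it is what simultaneously guarantees the strict decrease in $\ell_n$ (so that the induction terminates) and the preservation of the support $A$ (since the vertex $v$ must remain present in the first syllable of $\tilde g'$). Both consequences rest on the fact that $\tilde g$ realises the minimal word length in its conjugacy class, and without that hypothesis one could not exclude the possibility of $v$ being lost from the support after cyclic conjugation.
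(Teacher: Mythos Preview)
Your inductive strategy is natural, but the ``key claim'' that $\ell_w(w_n w_1) = \ell_w(w_n) + \ell_w(w_1)$ is false in general, and this breaks the induction as you have set it up. Take $G = C_3 \ast \Z$ with $C_3 = \langle x \rangle$ and $\Z = \langle a \rangle$, and consider $\tilde g = x\, a\, x$. One checks directly that $\tilde g$ is cyclically reduced in the paper's sense: any decomposition $\tilde g = p^{-1} h p$ with $|\tilde g| = 2|p| + |h|$ and $p \neq 1$ would force $|p| = 1$, $|h| = 1$, but $\tilde g$ has no length-$1$ conjugate. Yet $w_n w_1 = x^2 = x^{-1}$ has length $1 < 2$. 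The flaw in your justification is that $|\tilde g|$ is \emph{not} minimal among all conjugates of $g$: the paper only requires $|\tilde g|$ to be minimal among conjugates $h$ admitting $g = p^{-1} h p$ with $|g| = 2|p| + |h|$. In the example, $\tilde g' = x^{-1} a$ has length $2 < 3 = |\tilde g|$, but no $p$ satisfies $2|p| + 2 = 3$, so no contradiction with the definition arises.

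The weaker assertion $w_n w_1 \neq 1$ does survive: if $w_n = w_1^{-1}$ then $\tilde g = w_1 (w_2 \cdots w_{n-1}) w_1^{-1}$ with $|\tilde g| = 2|w_1| + |w_2 \cdots w_{n-1}|$, which genuinely contradicts cyclic reducedness. So you still get $\supp(\tilde g') = A$ and $\ell_n(\tilde g') \leq n-1$. What you lose is $|\tilde g'| = |\tilde g|$, and with it your argument that $\tilde g'$ is itself cyclically reduced --- which you need in order to feed $\tilde g'$ back into the inductive hypothesis (the lemma, and hence the hypothesis, is stated for elements of the form $\tilde g$, i.e.\ cyclically reduced ones). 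Even had $|\tilde g'| = |\tilde g|$ held, note that equality of lengths alone does not obviously force cyclic reducedness, since the latter is not the same as having minimal length in the conjugacy class.

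For comparison, the paper avoids this induction: it sets $\tilde p_g$ equal to the product of \emph{all} ``last syllables'' occurring in normal forms of $\tilde g$ (these pairwise commute), invokes \cite[Lemma~23]{barkauskas} to conclude in one step that $\hat g = \tilde p_g \tilde g \tilde p_g^{-1}$ is cyclically normal, and then argues separately---using \cite[Lemma~18]{barkauskas} together with the minimality of $|\tilde g|$---that no vertex of $A$ is lost from the support.
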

\begin{proof}
If $\ell_n(\tilde{g}) \leq 1$ then $\tilde{p}_g = 1$ does the job. Thus suppose that $n := \ell_n(\tilde{g}) \geq 2$. Let
\begin{equation*}
\begin{aligned}
E(\tilde{g}) := \{ g_0 \mid ~&w = w_1 \cdots w_n \in X^\ast \text{ is a normal form for } \tilde{g} \text{ where } \\ &w_i \in X(v_i)^\ast \text{ for some } v_i \in V(\Gamma), \text{ and } w_n \text{ represents } g_0 \}
\end{aligned}
\end{equation*}
be a finite subset of $G_A$. By Proposition \ref{p:wbeh}, any two elements in $E(\tilde{g})$ commute, and so, for any two distinct elements $g_1 \in \hh(v_1)$ and $g_2 \in \hh(v_2)$ of $E(\tilde{g})$, one has $v_1 \neq v_2$. Now define $\tilde{p}_g := \prod_{g_n \in E(\tilde{g})} g_n$. Then $\tilde{p}_g \in G_A$, and following the proof of \cite[Lemma 23]{barkauskas} one can see that $\hat{g} := \tilde{p}_g \tilde{g} \tilde{p}_g^{-1}$ is cyclically normal. Since $\supp(\tilde{p}_g) \subseteq A$ and $\supp(\tilde{g}) = A$, it is clear that $\supp(\hat{g}) \subseteq A$. It also follows by \cite[Lemma 18]{barkauskas} that $\supp(\hat{g}) \cup \supp(\tilde{p}_g) \supseteq A$. Thus one only needs to check that $\supp(\tilde{p}_g) \subseteq \supp(\hat{g})$.

Suppose for contradiction that there exists some $v \in \supp(\tilde{p}_g) \setminus \supp(\hat{g})$, and let $g_v \in E(\tilde{g}) \cap \hh(v)$ be the (unique) element. It is easy to see that $v \notin \link(A \setminus \{v\})$: otherwise any normal form of $\hat{g}$ would contain a subword in $X(v)^\ast$ representing $g_v$ and so $v \in \supp(\hat{g})$. Then, following again the proof of \cite[Lemma 23]{barkauskas}, one has $\tilde{n} := \ell_n(g_v \tilde{g} g_v^{-1}) \leq n-1$, with $\tilde{n} = n-1$ if and only if $\tilde{g}$ has no normal form $w_1 \cdots w_n$, where $w_i \in X(v_i)^\ast$ for some $v_i \in V(\Gamma)$, with $w_1$ and $w_n$ representing $g_v^{-1}$ and $g_v$, respectively. Thus, by minimality of $|\tilde{g}|$, clearly $\tilde{n} = n-1$; but this cannot happen by \cite[Lemma 18]{barkauskas}, since by assumption $v \notin \supp(\hat{g})$. Hence $\supp(\tilde{p}_g) \subseteq \supp(\hat{g})$, as required.
\end{proof}

The following Proposition describes growth of centralisers in $G$.

\begin{prop} \label{p:cent}
Let $g,\tilde{g} \in G$ and $A \subseteq V(\Gamma)$ be as above. Then 
\[
C_G(\tilde{g}) = H_1 \times \cdots \times H_k \times G_{\link A}
\]
for some subgroups $H_1,\ldots,H_k \leq G$, and the following hold:
\begin{enumerate}[(i)]
%\item \label{i:lcentdj} for $1 \leq i \leq k$, the subsets $A_i := \bigcup_{h_i \in H_i} \supp(h_i) \subseteq V(\Gamma)$ are pairwise disjoint and disjoint from $\link A$;
\item \label{i:lcentl} for any $h_1 \in H_1, \ldots, h_k \in H_k$ and $c \in G_{\link A}$,
\[
|h_1 \cdots h_k c|_X = |h_1|_X + \cdots + |h_k|_X + |c|_X;
\]
\item \label{i:lcentb} there exist constants $D_1,\ldots,D_k,\alpha_1,\ldots,\alpha_k \in \Z_{\geq 1}$ such that
\[
|H_i \cap B_{G,X}(n)| \leq D_i n^{\alpha_i}
\]
for all $n \geq 1$.
\end{enumerate}
Furthermore, the number $k \in \Z_{\geq 1}$, the $D_i$ and the $\alpha_i$ only depend on $A$ and not on $g$.
\end{prop}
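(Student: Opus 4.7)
The plan is to combine Lemma \ref{l:cn} with Barkauskas's structure theorem for centralisers of cyclically normal elements in a graph product. First I would apply Lemma \ref{l:cn} to conjugate $\tilde{g}$ by an element $\tilde{p}_g \in G_A$ into a cyclically normal element $\hat{g} = \tilde{p}_g \tilde{g} \tilde{p}_g^{-1}$ with $\supp(\hat{g}) = A$; since $C_G(\tilde{g}) = \tilde{p}_g^{-1} C_G(\hat{g}) \tilde{p}_g$ and $\tilde{p}_g$ commutes elementwise with $G_{\link A}$, it suffices to describe $C_G(\hat{g})$ and then transfer the decomposition back by conjugation within $G_A$.

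The key combinatorial step is to partition $A = A_1 \sqcup \cdots \sqcup A_k$ into the connected components of the induced subgraph of the complement of $\Gamma$ on $A$. Vertices in distinct pieces $A_i, A_j$ are then joined by an edge of $\Gamma$, so the special subgroups $G_{A_i}$ and $G_{A_j}$ commute elementwise and $G_A = G_{A_1} \times \cdots \times G_{A_k}$. Accordingly, write $\hat{g} = \hat{g}_1 \cdots \hat{g}_k$ and $\tilde{p}_g = \tilde{p}_{g,1} \cdots \tilde{p}_{g,k}$ with $\hat{g}_i, \tilde{p}_{g,i} \in G_{A_i}$. Each $\hat{g}_i$ is then non-trivial and cyclically normal in $G_{A_i}$ with full support $A_i$, and Barkauskas's centraliser theorem from \cite{barkauskas} yields
\begin{equation*}
C_G(\hat{g}) = C_{G_{A_1}}(\hat{g}_1) \times \cdots \times C_{G_{A_k}}(\hat{g}_k) \times G_{\link A};
\end{equation*}
here $k$ depends only on $A$, as required by the final clause of the proposition.

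Next I would analyse each factor. If $A_i = \{v\}$ is a single vertex, then $C_{G_{A_i}}(\hat{g}_i) = C_{\hh(v)}(\hat{g}_i)$ is the centraliser of a non-trivial element in the vertex group; since $\hh(v)$ is a retract of $G$ and $|\cdot|_X$ restricts to $|\cdot|_{X(v)}$ on $\hh(v)$, the small-centralisers hypothesis for the rational pair $(\hh(v), X(v))$ gives a polynomial bound of the form $P_v n^{\beta_v}$ with constants depending only on $v$. If $|A_i| \geq 2$, then Barkauskas identifies $C_{G_{A_i}}(\hat{g}_i)$ with an infinite cyclic subgroup of $G_{A_i}$ generated by a root of $\hat{g}_i$, so it meets $B_{G,X}(n)$ in at most $2n + 1$ elements regardless of the length of the generator in $X$. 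Conjugation by $\tilde{p}_{g,i} \in G_{A_i}$ preserves both forms (yielding a centraliser of another non-trivial element of $\hh(v)$ in the first case, and another infinite cyclic subgroup of $G_{A_i}$ in the second), so the subgroups $H_i := \tilde{p}_{g,i}^{-1} C_{G_{A_i}}(\hat{g}_i) \tilde{p}_{g,i}$ deliver the desired decomposition $C_G(\tilde{g}) = H_1 \times \cdots \times H_k \times G_{\link A}$ with growth bounds independent of $g$.

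The length-additivity in part (i) then follows from Proposition \ref{p:wbeh}: concatenating normal forms for the $h_i \in H_i \leq G_{A_i}$ and for $c \in G_{\link A}$ produces a word representing $h_1 \cdots h_k c$ whose $\ell_w$-length is already minimal, because the supports of the factors lie in the pairwise disjoint subsets $A_1, \ldots, A_k, \link A$ of $V(\Gamma)$ and no reduction internal to a single vertex group can cross these boundaries. The main obstacle is identifying and deploying the correct form of Barkauskas's centraliser theorem, which must simultaneously supply the direct-product decomposition along the pieces $A_i$ and the infinite-cyclic structure of the multi-vertex factors; once that theorem is in hand, the remainder of the argument is a routine assembly of ingredients, with the small-centralisers hypothesis doing the substantive work only in the single-vertex case.
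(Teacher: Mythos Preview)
Your plan matches the paper's proof essentially step for step: the same use of Lemma~\ref{l:cn}, the same partition of $A$ into the connected components of $\Gamma(A)^C$, the same appeal to Barkauskas's centraliser theorem (the paper cites Proposition~25, Theorem~32 and Theorem~52 of \cite{barkauskas}) for the direct-product decomposition and the cyclic structure of the multi-vertex factors, and the same use of Proposition~\ref{p:wbeh} for part~(i).

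One point in your outline is underjustified: the assertion that an infinite cyclic $H_i$ ``meets $B_{G,X}(n)$ in at most $2n+1$ elements regardless of the length of the generator'' does \emph{not} follow from being infinite cyclic alone (for instance, the centre of the integral Heisenberg group is cyclic but intersects $B(n)$ in $\sim n^2$ elements). The paper closes this by invoking \cite[Lemma~37]{barkauskas}, which gives $\ell_n(\hat{g}_i^{\gamma}) = |\gamma|\,\ell_n(\hat{g}_i)$ whenever $\hat{g}_i$ is cyclically normal with $\Gamma(A_i)^C$ connected, and then notes that conjugating by $\tilde{p}_{g,i}\in G_{A_i}$ can only increase $\ell_n$; this yields $|\tilde{g}_i^{\gamma}|_X \geq \ell_n(\tilde{g}_i^{\gamma}) \geq |\gamma|$, hence $|H_i \cap B_{G,X}(n)| \leq 2n+1$, giving the uniform constants $D_i = 3$, $\alpha_i = 1$ independent of $g$. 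You should make this step explicit.
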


\begin{proof}
Let $A_1,\ldots,A_k \subseteq A$ form a partition of $A$ such that the graphs $\Gamma(A_i)^C$ are precisely the connected components of the graph $\Gamma(A)^C$, where $\Delta^C$ denotes the complement of a graph $\Delta$. Let $\tilde{p}_g,\hat{g} \in G_A$ be as in Lemma \ref{l:cn}. Then $\supp(\hat{g}) = A$ and so $\hat{g}$ can be expressed as
\begin{equation*}
\hat{g} = \hat{g}_1 \cdots \hat{g}_k
\end{equation*}
where $\supp(\hat{g}_i) = A_i$.

Now suppose without loss of generality that for some $m$, the sets $A_i = \{ v_i \}$ are singletons for $1 \leq i \leq m$, and $|A_i| \geq 2$ for $m+1 \leq i \leq k$. Then Proposition 25, Theorem 32 and Theorem 52 in \cite{barkauskas} state that the centraliser of $\hat{g}$ in $G$ is
\begin{equation*}
C_G(\hat{g}) = C_{\hh(v_1)}(\hat{g}_1) \times \cdots \times C_{\hh(v_m)}(\hat{g}_m) \times \langle h_{m+1} \rangle \times \cdots \times \langle h_k \rangle \times G_{\link A}
\end{equation*}
where $h_{m+1},\ldots,h_k \in G$ are some infinite order elements with $\supp(h_i) = A_i$ (in fact, one has $\hat{g}_i = h_i^{\beta_i}$ for some $\beta_i \in \Z \setminus \{0\}$).

In particular, since $\tilde{p}_g \in G_A$, one has $\tilde{p}_g = p_1 \cdots p_k$ for some $p_i \in G_{A_i}$. Thus $\tilde{p}_g^{-1} q_i \tilde{p}_g = p_i^{-1} q_i p_i$ for any $q_i \in G_{A_i}$, and $\tilde{p}_g^{-1} (G_{\link A}) \tilde{p}_g = G_{\link A}$, hence
\begin{equation*}
%\label{e:centr2}
\begin{aligned}
C_G(\tilde{g}) = \tilde{p}_g^{-1} C_G(\hat{g}) \tilde{p}_g = ~&C_{\hh(v_1)}(\tilde{g}_1) \times \cdots \times C_{\hh(v_m)}(\tilde{g}_m) \\ &\times \langle \tilde{g}_{m+1} \rangle \times \cdots \times \langle \tilde{g}_k \rangle \times G_{\link A}
\end{aligned}
\end{equation*}
where $\tilde{g}_i := p_i^{-1} \hat{g}_i p_i$ for $1 \leq i \leq m$, and $\tilde{g}_i := p_i^{-1} h_i p_i$ for $m+1 \leq i \leq k$. Hence, by setting $H_i := C_{\hh(v_i)}(\tilde{g}_i)$ for $1 \leq i \leq m$ and $H_i := \langle \tilde{g}_i \rangle \cong \Z$ for $m+1 \leq i \leq k$ one obtains the required expression.
%Part \eqref{i:lcentdj} follows from construction, as does the fact that
By construction,
$k$ depends only on $A$ (and not on $g$).

To show \eqref{i:lcentl}, it is enough to note that $H_i \leq G_{A_i}$ for each $i$, and that by construction the subsets $A_i$ are pairwise disjoint and disjoint from $\link A$. Indeed, then it follows from Proposition \ref{p:wbeh} that if $w_i$ (respectively $u$) is a normal form for an element $h_i \in G_{A_i}$ (respectively $c \in G_{\link A}$), then $w_1 \cdots w_k u$ is a normal form for the element $h_1 \cdots h_k c$. This implies \eqref{i:lcentl}.

To show \eqref{i:lcentb} and the last part of the Proposition, one may consider cases $1 \leq i \leq m$ and $m+1 \leq i \leq k$ separately. For $1 \leq i \leq m$, note that, as a consequence of Proposition \ref{p:wbeh}, $|h|_X = |h|_{X(v_i)}$ for all $h \in H_i$, and therefore $|H_i \cap B_{G,X}(n)| = |H_i \cap B_{\hh(v_i),X(v_i)}(n)|$ for all $n \geq 1$. Thus, \eqref{i:lcentb} follows from the facts that $\tilde{g}_i \neq 1$ and that $(\hh(v_i),X(v_i))$ is a rational pair with small centralisers; it also follows that $D_i,\alpha_i$ do not depend on $g$. For $m+1 \leq i \leq k$, it follows from the proof of \cite[Lemma 37]{barkauskas} that since $\hat{g}_i$ is cyclically normal and since $\Gamma(\supp(\hat{g}_i))^C = \Gamma(A_i)^C$ is connected, one has $\ell_n(\tilde{g}_i^\gamma) \geq \ell_n(\hat{g}_i^\gamma) = |\gamma| \ell_n(\hat{g}_i)$ for all $\gamma \in \Z$. In particular, $|\tilde{g}_i^\gamma|_X \geq \ell_n(\tilde{g}_i^\gamma) \geq |\gamma|$ for any $\gamma \in \Z$ and so $|H_i \cap B_{G,X}(n)| \leq 2n+1 \leq 3n$ for all $n \geq 1$. Thus taking $D_i = 3$ and $\alpha_i = 1$ shows \eqref{i:lcentb}; independence from $g$ is clear.
\end{proof}

\subsection{Products of special subgroups}
\label{ss:prod}

To finalise the proof, one employs the following general result:
\begin{lem} \label{l:gr}
Let $G$ be a group with a finite generating set $X$. Let $H,K \leq G$ be subgroups such that $H \times K$ is also a subgroup of $G$, i.e.\ the map $H \times K \to G, (h,k) \mapsto hk$ is an injective group homomorphism. Suppose that there exist constants $\alpha_H,\alpha_K \in \Z_{\geq 0}$, $\lambda_H,\lambda_K \in [1,\infty)$ and $D > C \geq 0$ such that
\begin{align*}
C n^{\alpha_H} \lambda_H^n &\leq |H \cap S_{G,X}(n)| \leq D n^{\alpha_H} \lambda_H^n \\
\text{and} \qquad C n^{\alpha_K} \lambda_K^n &\leq |K \cap S_{G,X}(n)| \leq D n^{\alpha_K} \lambda_K^n
\end{align*}
for all $n \geq 1$. Furthermore, suppose that $|hk|_X = |h|_X + |k|_X$ for all $h \in H^{(n)}$, $k \in K^{(n)}$, and that $\lambda_H \geq \lambda_K$. If $\lambda_H > \lambda_K$, then there exists constant $\widetilde{D} = \widetilde{D}(D,\alpha_H,\alpha_K,\lambda_H,\lambda_K) > 0$, which does not depend on $H$ or $K$, such that
\[
|(H \times K) \cap S_{G,X}(n)| \leq \widetilde{D} n^{\alpha_H} \lambda_H^n
\]
for all $n \geq 1$. Furthermore, if $\lambda_H = \lambda_K$ and $C > 0$, then no such constant $\widetilde{D}$ exists.
\end{lem}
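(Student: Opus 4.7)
The plan is to exploit the length-additivity hypothesis $|hk|_X = |h|_X + |k|_X$ to write the sphere $(H \times K) \cap S_{G,X}(n)$ as a convolution of the spheres of $H$ and $K$. Indeed, every element $g \in H \times K$ admits a unique factorisation $g = hk$ with $h \in H$ and $k \in K$, and by the hypothesis such a factorisation satisfies $|h|_X + |k|_X = |g|_X$. Consequently,
\[
|(H \times K) \cap S_{G,X}(n)| = \sum_{i=0}^{n} |H \cap S_{G,X}(i)| \cdot |K \cap S_{G,X}(n-i)|,
\]
and the problem reduces to estimating a convolution of the two sequences $a_i = |H \cap S_{G,X}(i)|$ and $b_j = |K \cap S_{G,X}(j)|$, for which the hypotheses of the lemma supply essentially matching upper and lower bounds of the form $i^{\alpha_H}\lambda_H^i$ and $j^{\alpha_K}\lambda_K^j$.

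For the upper bound under the assumption $\lambda_H > \lambda_K$, the strategy is to pull out the factor $\lambda_H^n$ and substitute $j = n-i$, rewriting the convolution as $\lambda_H^n \sum_{j} (n-j)^{\alpha_H} j^{\alpha_K}(\lambda_K/\lambda_H)^j$ up to the endpoint terms (which are harmless). Bounding $(n-j)^{\alpha_H} \leq n^{\alpha_H}$ and extending the sum to infinity, we recover a factor $n^{\alpha_H}\lambda_H^n$ times the convergent geometric-polynomial series $\sum_{j=1}^\infty j^{\alpha_K}(\lambda_K/\lambda_H)^j$. The endpoint terms $i=0$ and $i=n$ contribute at most $|H \cap S_{G,X}(n)| + |K \cap S_{G,X}(n)|$, which are already absorbed into $n^{\alpha_H}\lambda_H^n$ (since $\lambda_H \geq \lambda_K$ and we may increase the constant further if $\alpha_H < \alpha_K$ — but the claim does not require $\alpha_H \geq \alpha_K$, so one simply notes $j^{\alpha_K}$ in the tail dominates any fixed polynomial deficit). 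The resulting constant $\widetilde{D}$ depends only on $D$, $\alpha_H$, $\alpha_K$, and $\lambda_H/\lambda_K$, as required.

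For the second part, when $\lambda_H = \lambda_K =: \lambda$ and $C > 0$, the goal is to show the convolution grows like $n^{\alpha_H+\alpha_K+1}\lambda^n$, which exceeds $n^{\alpha_H}\lambda^n$ by an unbounded factor. Restricting the sum to $i \in [n/4,\,3n/4]$ and applying the lower bound $Cn^{\alpha}\lambda^n$ on each sphere, both $i^{\alpha_H}$ and $(n-i)^{\alpha_K}$ are bounded below by $(n/4)^{\alpha_H}$ and $(n/4)^{\alpha_K}$ respectively, while the number of such indices is of order $n$. This immediately produces a lower bound of the form $C' n^{\alpha_H+\alpha_K+1}\lambda^n$, and since $\alpha_K \geq 0$, no constant $\widetilde{D}$ can bound this by $\widetilde{D} n^{\alpha_H}\lambda^n$ as $n \to \infty$.

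The main obstacle is not conceptual but bookkeeping: keeping the constant $\widetilde{D}$ genuinely independent of $H$ and $K$ (only depending on the stated parameters) requires care in handling the $i = 0$ and $i = n$ terms, where the bounds $Ci^\alpha\lambda^i$ from Theorem \ref{t:growth} apply only for $i \geq 1$, and in ensuring the convergent tail sum is bounded purely in terms of $\alpha_K$ and $\lambda_K/\lambda_H$. Once these edge cases are absorbed into $\widetilde{D}$, the proof is complete.
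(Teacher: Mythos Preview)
Your proposal is correct and follows essentially the same approach as the paper: both use length-additivity to write $|(H\times K)\cap S_{G,X}(n)|$ as a convolution, bound the upper estimate by the convergent series $\sum_j j^{\alpha_K}(\lambda_K/\lambda_H)^j$ after factoring out $n^{\alpha_H}\lambda_H^n$, and for the lower estimate restrict the sum to a central range where both factors are comparable to their maxima. The only notable difference is cosmetic: for the second part the paper restricts to $i\in[\lceil\sqrt{n}\rceil,\lfloor n/2\rfloor]$ and obtains divergence of order $n^{\alpha_K/2+1}$, whereas your choice $i\in[n/4,3n/4]$ yields the sharper order $n^{\alpha_K+1}$---either suffices.
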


% The `if' direction of the Lemma is true even for $C = 0$.

\begin{proof}
Suppose first that $\lambda_H > \lambda_K$. Clearly it is enough to show that
\begin{equation*}
\limsup_{n \to \infty} \frac{|(H \times K) \cap S_{G,X}(n)|}{n^{\alpha_H}\lambda_H^n} < \infty.
\end{equation*}
Fix $n \geq 1$. As $|hk|_X = |h|_X + |k|_X$ for any $h \in H$, $k \in K$, one has
\begin{equation*}
\begin{split}
&\frac{|(H \times K) \cap S_{G,X}(n)|}{n^{\alpha_H}\lambda_H^n} = \frac{1}{n^{\alpha_H}\lambda_H^n} \sum_{i=0}^n |H \cap S_{G,X}(n-i)| \times |K \cap S_{G,X}(i)| \\ &\qquad \leq D^2 \left( 1 + \sum_{i=1}^{n-1} \left( \frac{\lambda_K}{\lambda_H} \right)^i \left(\frac{n-i}{n}\right)^{\alpha_H} i^{\alpha_K} + \left( \frac{\lambda_K}{\lambda_H} \right)^n n^{\alpha_K-\alpha_H} \right).
\end{split}
\end{equation*}
As $\lambda_K/\lambda_H < 1$, limits of the first and third term above as $n \to \infty$ are $D^2$ and $0$, respectively. The second term can be bounded above by an upper bound for the series $D^2 \sum_i (\lambda_K/\lambda_H)^i i^{\alpha_K}$, which converges by the ratio test. Hence indeed $\limsup_{n \to \infty} |(H \times K) \cap S_{G,X}(n)|/(n^{\alpha_H}\lambda_H^n) < \infty$, which implies the result. It is also clear from the inequality above that $\widetilde{D}$ depends only on $D$, $\alpha_H$, $\alpha_K$, $\lambda_H$ and $\lambda_K$.

Conversely, suppose that $C > 0$ and $\lambda_H = \lambda_K =: \lambda$. Let $n \geq 20$, so that $\lceil \sqrt{n} \rceil \leq n/4$. Then
\begin{equation*}
\begin{split}
\frac{|(H \times K) \cap S_{G,X}(n)|}{n^{\alpha_H}\lambda^n} &= \frac{1}{n^{\alpha_H}\lambda^n} \sum_{i=0}^n |H \cap S_{G,X}(n-i)| \times |K \cap S_{G,X}(i)| \\ &\geq C^2 \sum_{i=1}^{n-1} \left(\frac{n-i}{n}\right)^{\alpha_H} i^{\alpha_K} \geq C^2 \sum_{i=\lceil \sqrt{n} \rceil}^{\lfloor n/2 \rfloor} \left(\frac{1}{2}\right)^{\alpha_H} (\sqrt{n})^{\alpha_K} \\ &\geq C^2 2^{-(\alpha_H+2)} n^{\frac{\alpha_K}{2} + 1}.
\end{split}
\end{equation*}
In particular, one has $|(H \times K) \cap S_{G,X}(n)|/(n^{\alpha_H}\lambda_H^n) \to \infty$ as $n \to \infty$, implying the result.
\end{proof}

Given this Lemma, the proof can be finalised as follows. Recall (see \eqref{e:Sbound} and \eqref{e:Bbound}) that one has constants $\alpha \in \Z_{\geq 0}$, $\lambda > 1$ and $D_{V(\Gamma)} > C_{V(\Gamma)} > 0$ such that
\begin{equation}
\label{e:Ggr}
\begin{split}
&C_{V(\Gamma)} n^\alpha \lambda^n \leq \sss_{G}(n) \leq D_{V(\Gamma)} n^\alpha \lambda^n \\
\text{and} \quad &C_{V(\Gamma)} n^\alpha \lambda^n < \bb_{G}(n) < \frac{D_{V(\Gamma)}\lambda}{\lambda-1} n^\alpha \lambda^n
\end{split}
\end{equation}
for all $n \geq 1$. Now \eqref{e:A2} implies that, for infinitely many $n$,
\begin{equation} \label{e:A30}
\widetilde{C}_A n^\alpha \lambda^n \leq \bb_{G_A}(n) \leq \widetilde{D}_A n^\alpha \lambda^n
\end{equation}
for some $\widetilde{D}_A > \widetilde{C}_A > 0$. But as $G_A$ has rational growth with respect to $\bigsqcup_{v \in A} X(v)$, it follows from Theorem \ref{t:growth} that in fact, after modifying the constants $\widetilde{D}_A$ and $\widetilde{C}_A$ if necessary, \eqref{e:A30} holds for all $n \geq 1$, and since $\lambda > 1$, after further modifying $\widetilde{C}_A$, one has
\begin{equation*} \label{e:A3} \tag{$\ast\!\ast\!\ast$}
\widetilde{C}_A n^\alpha \lambda^n \leq \sss_{G_A}(n) \leq \widetilde{D}_A n^\alpha \lambda^n
\end{equation*}
for all $n \geq 1$.

Moreover, \eqref{e:linkA2} implies that for infinitely many $n \geq 2s+1$ there exists $g \in B(n)$ such that
\begin{equation*}
%\label{e:linkAl0a0}
\widetilde{C}' (n-2s)^\alpha \lambda^{n-2s} \leq |C_G(\tilde{g}) \cap B_{G,X}(n)| \leq \widetilde{D}' (n-2s)^\alpha \lambda^{n-2s}
\end{equation*}
for some $\widetilde{D}' > \widetilde{C}' > 0$. After decreasing the constant $\widetilde{C}' > 0$ if necessary, one may therefore assume that, for infinitely many $n$,
\begin{equation}
\label{e:linkAC}
\widetilde{C}' n^\alpha \lambda^n \leq |C_G(\tilde{g}) \cap B_{G,X}(n)| \leq \widetilde{D}' n^\alpha \lambda^n
\end{equation}
for some $g \in B(n)$ with $\supp(\widetilde{g}) = A$ and $|p_g| \leq s$.

Note that $G_{\link A}$ has rational growth with respect to $\bigsqcup_{v \in \link A} X(v)$ as it is a special subgroup of $G$, and so by Theorem \ref{t:growth} it follows that, for all $n \geq 1$,
\begin{equation}
\label{e:linkAl0a0}
\widetilde{C}_{\link A} n^{\alpha_0} \lambda_0^n \leq \sss_{G_{\link A}}(n) \leq \widetilde{D}_{\link A} n^{\alpha_0} \lambda_0^n
\end{equation}
for some $\widetilde{D}_{\link A} > \widetilde{C}_{\link A} > 0$ and some $\alpha_0 \in \Z_{\geq 0}$, $\lambda_0 \geq 1$.

One may now show that $(\lambda_0,\alpha_0) = (\lambda,\alpha)$. Indeed, as $\sss_{G_{\link A}}(n) \subseteq \sss_G(n)$, it follows from \eqref{e:Ggr} that either $\lambda_0 < \lambda$ or $\lambda_0 = \lambda$ and $\alpha_0 \leq \alpha$. Let $g \in G$ be such that $\supp(\widetilde{g}) = A$ for all $n$. By Proposition \ref{p:cent}, one has an expression
\[
C_G(\widetilde{g}) = H_1 \times \cdots \times H_k \times G_{\link A}.
\]

One now applies Lemma \ref{l:gr} $k$ times. In particular, for each $i=k,k-1,\ldots,1$ in order, it follows from Proposition \ref{p:cent} that Lemma \ref{l:gr} can be applied for
\begin{align*}
H &:= H_{i+1} \times \cdots \times H_k \times G_{\link A}, \\
K &:= H_i \\
(\alpha_H,\lambda_H) &:= \begin{cases} (\alpha_0,\lambda_0) & \text{if } \lambda_0 > 1, \\ (0,\frac{\lambda+1}{2}) & \text{if } \lambda_0 = 1, \end{cases} \\
(\alpha_K,\lambda_K) &:= \left(0,\frac{\lambda_H+1}{2}\right), \\
C &:= 0, \\
\text{and} \qquad D = \overline{D}_i &:= \max\{\widetilde{D}_i,\widetilde{D}_i'\}.
\end{align*}
Here $\widetilde{D}_i'>0$ is such that $D_i n^{\alpha_i} \leq \widetilde{D}_i' \lambda_K^n$ for each $n \geq 1$, where $D_i$ and $\alpha_i$ are as in Proposition \ref{p:cent}, $\widetilde{D}_k$ is such that $\sss_{G_{\link A}}(n) \leq \widetilde{D}_k n^{\alpha_H} \lambda_H^n$ for all $n \geq 1$, and, for each $i=k-1,k-2,\ldots,1$, $\widetilde{D}_i = \widetilde{D}(\overline{D}_{i+1},\alpha_H,\alpha_K,\lambda_H,\lambda_K)$ is the constant given by Lemma \ref{l:gr}.

It then follows that, for all $g \in G$ with $\supp(\widetilde{g}) = A$ and $|p_g| \leq s$,
\begin{equation}
\label{e:CGgsmall}
|C_G(\widetilde{g}) \cap S_{G,X}(n)| \leq \widetilde{D} n^{\alpha_H} \lambda_H^n
\end{equation}
for all $n \geq 1$, where $\widetilde{D} = \widetilde{D}(\overline{D}_1,\alpha_H,\alpha_K,\lambda_H,\lambda_K)$ is the constant, independent from $g$, given by Lemma \ref{l:gr}. Since $\lambda_H > 1$, by further increasing $\widetilde{D}$ we may replace $S_{G,X}(n)$ with $B_{G,X}(n)$ in \eqref{e:CGgsmall}. But by construction, one has either $\lambda_H < \lambda$ or $\lambda_H = \lambda$ and $\alpha_H \leq \alpha$, and so together with \eqref{e:linkAC} this implies that $(\lambda_H,\alpha_H) = (\lambda,\alpha)$. Thus, by the choice of $(\lambda_H,\alpha_H)$, one has $(\lambda_0,\alpha_0) = (\lambda,\alpha)$, as claimed. In particular, \eqref{e:linkAl0a0} can be rewritten as
\begin{equation*} \label{e:linkA3} \tag{$\dagger\!\dagger\!\dagger$}
\widetilde{C}_{\link A} n^\alpha \lambda^n \leq \sss_{G_{\link A}}(n) \leq \widetilde{D}_{\link A} n^\alpha \lambda^n.
\end{equation*}

Finally, note that the group $G_{A \cup \link A} = G_A \times G_{\link A}$ is a special subgroup of $G$ and so one has $S_{G_{A \cup \link A}}(n) \subseteq S_G(n)$. It then follows from \eqref{e:A3}, \eqref{e:linkA3} and Lemma \ref{l:gr} that for any $\widetilde{D} > 0$ one has
\begin{equation*}
\sss_G(n) \geq \sss_{G_{A \cup \link A}}(n) > \widetilde{D} n^\alpha \lambda^n
\end{equation*}
for some $n$, which contradicts \eqref{e:Ggr}. This completes the proof of Theorem \ref{t:dc}.

\bibliographystyle{amsplain}
\bibliography{../../../../all}

\end{document}